\def\Ltwo{L_2}
\def\P{\mathbb{P}}
\def\x{x}
\def\y{y}
\def\v{\mathbf{v}}
\def\Ox{\Omega_x}
\def\Oy{\Omega_y}
\def\divx{\triangledown_\x \cdot}
\newcommand\expec{\mathbb{E}}
\newcommand{\norma}[3]{\|{#1}\|_{#2(#3)}}
\newcommand\A{\mathbf{A}}
\newcommand\Seig{\mathbf{S}}
\newcommand\Leig{\mathbf{\Lambda}}
\newcommand\bq{\mathbf{q}}
\newcommand{\gpcsum}[3]{\sum_{n=0}^{#1}\, \widehat{#2}_n({#3},t)P_n(y)}
\newcommand{\uhat}[2]{\widehat{#1}_{#2}}
\newcommand{\pder}[3]{\frac{\partial^{#1}{#2}}{\partial {#3}}}
\newtheorem{thm}{Theorem}
\newcommand{\ignore}[1]{}
\newlist{todolist}{itemize}{2}
\setlist[todolist]{label=$\square$}
\newcommand{\KH}[4]{K_{#1}^{(#2,#3)}\left(#4\right)}
\newcommand{\FKH}[4]{\hat{K}_{#1}^{(#2,#3)}\left(#4\right)}
\newcommand{\FullK}[3]{\sum_{\gamma = 0}^{#1}\, c_\gamma #2(#3-x_\gamma)}
\newcommand{\BS}[1]{B^{(#1)}}
\DeclareMathOperator{\sinc}{sinc}
\title{SIAC Accuracy Enhancement of Stochastic Galerkin Solutions for Wave Equations with Uncertain Coefficients}
\author{Andr\'es Galindo-Olarte\thanks{Oden Institute for Computational Sciences and Engineering, University of Texas at Austin, Austin, TX 78712, USA, (afgalindo@utexas.edu).}\and Jennifer Ryan\thanks{Department of Mathematics, KTH Royal Institute of Technology, 114 28 Stockholm, Sweden, (jryan@kth.se).}}
\date{\today}
\begin{document}

\maketitle
\begin{abstract}
This article establishes the usefulness of the Smoothness-Increasing Accuracy-Increasing (SIAC) filter for reducing the errors in the mean and variance for a wave equation with uncertain coefficients solved via generalized polynomial chaos (gPC) whose coefficients are approximated using discontinuous Galerkin (DG-gPC). Theoretical error estimates that utilize information in the negative-order norm are established.  While the gPC approximation leads to order of accuracy of  $m-1/2$ for a sufficiently smooth solution (smoothness of $m$ in random space), the approximated coefficients solved via DG improves from order $k+1$ to $2k+1$ for a solution of smoothness $2k+2$ in physical space.  Our numerical examples verify the performance of the filter for improving the quality of the approximation and reducing the numerical error and significantly eliminating the noise from the spatial approximation of the mean and variance. Further, we illustrate how the errors are effected by both the choice of smoothness of the kernel and number of function translates in the kernel. Hence, this article opens the applicability of SIAC filters to other hyperbolic problems with uncertainty, and other stochastic equations. 
\end{abstract}

\section{Introduction}
\label{secn:introduction}

Generalized Polynomial Chaos (gPC) has long been established as a useful tool for measuring uncertainty.  It allows for constructing useful insight into complex systems by taking a linear combination of many different snapshops of data that depend on a distribution of a random variable  \cite{xiu2002wiener,xiu2003modeling}.  In \cite{gottlieb2008galerkin}, gPC utilizing Legendre polynomials was established with the weights of the coefficients being defined by a mathematical model of some quantity of interest. When the equation defining the coefficients is nonlinear, it is often necessary to approximate the coefficients.  Doing so through discontinuous Galerkin methods (DG) is one choice for choosing the approximation.  In this article, we demonstrate how the errors when approximating the coefficients can be reduced through the use of a moment-based filter -- the Smoothness-Increasing Accuracy-Increasing (SIAC) filter. This filter reduces the discretized noise that arises from approximating the coefficients and improves the order of accuracy for those coefficients from order $k+1$ to up to $2k+1$ for coefficients defined by hyperbolic conservation laws.  We theoretically and numerically prove the effectiveness of the SIAC filter for reducing the error and accelerating the convergence rate.

A multi-element gPC expansion was introduced in \cite{wan2006long}, where a collocation multi-element method was implemented for the wave equation and $2D$ flow past a circular cylinder.  However, it relies on an increasing number of coefficients during time evolution.  In \cite{Motamedetal2013} the coefficients are for a two-dimensional wave equation were approximated using either finite difference or finite elements in the physical space with a tensor-product collocation method based on orthogonal polynomials (Gauss points) in the probability space. A local discontinuous Galerkin method combined with a leap frog time-stepping scheme was shown to be effective for a two-dimensional wave equation with a spatial discontinuity \cite{chou2019energy}.  

Accuracy enhancement of the gPC expansion is not new.  Tang and Zhou utilized a collocation approach together with a domain decomposition idea \cite{tang2010convergence}. In their work, they used a local reconstruction combined with a Bayes reconstruction of the means to obtain accuracy enhancement.  
In \cite{kusch2020filtered}, oscillations were adaptively dampened in regions of uncertainty.  The authors also introduced a new filter based on Lasso regression [24]. The resulting filter depends on the gPC coefficients and sets small-magnitude and high-order coefficients to zero, yielding sparsity in the filtered coefficients.  Indeed, filters are often introduced to aid in addressing the curse of dimensionality problem \cite{chou2019energy,cci̇louglu2023stochastic,ccilouglu2022stochastic}.

In this article, instead of utilizing Bayes rule for approximating a non-local mean, we take the local approximation to the coefficients using DG and utilize a compactly-supported moment-based limiter to reconstruct a non-local average, specifically, the SIAC filter.  The SIAC filter aids in accelerating the decay of the approximated coefficients in Fourier space.  DG approximations are known to have higher-order information in the dissipation and dispersion errors. The filter is able to draw out this higher order information from Fourier space so that it can be seen in physical space.   Further, as shown in \cite{Adjerid1,lowerie}, there are superconvergent points that give the error a regular pattern of oscillations.  This can but measured through the negative-order Sobolov norm, which can be used to bound the $L^2-$error \cite{bramble1977}.  In this article, theoretical insight into the effectiveness of the filter, including negative order norm estimates, are shown together with a numerical demonstration of its effectiveness.  Additionally, the filter consists of three filtering parameters that allow for choosing the amount of dissipation, allowable accuracy, and scale for the pattern of oscillations. The affect of utilizing the different filtering parameters for dissipation and accuracy is demonstrated numerically.  These results will be useful in uncertainty quantification as they enable a better approximation to statistical quantities of interest.  

The article outline is as follows: In Section \Cref{secn:background} we present the notation that will be utilized throughout this article as well as the basics of the gPC method, including the theoretical convergence.  In Section \Cref{scn:dg_discretization}, we review the application of the DG method to a system of equations for the polynomial chaos coefficients.  We then discuss the SIAC filter in Section \Cref{secn:siac} and proof of higher order accuracy of statistical quantities of interest in the negative-order Sobolev norm.  We also prove both theoretically and numerically (Section \Cref{secn:numerical_experiments}) the effectiveness of SIAC in drawing the high-order information out.

\section{Background}
 \label{secn:background}
 

In order to understand how accuracy enhancing post-processing can aid in approximating stochastic data, we begin by considering the case where the coefficients are defined by a linear hyperbolic equation:
\begin{align}
    \label{eqn:model}
    \pder{}{u}{t} = & c(y)\pder{}{u}{x},\qquad (\x,t;y) \in \Ox \times (0,T] \times \Oy\\ 
    u(x,0;y) = & u_0(x;y)\notag
\end{align}
Here, $u = u(x,t;y),$ with $x$ and $t$ representing (cartesian) space and time and $y\in\Omega_y=(-1,1)$ in properly defined complete random space with probability distribution $\rho(y)$. In this article we will only consider periodic boundary conditions in physical space $x$. The boundary conditions for the original scalar equation, \eqref{eqn:model}, depend on the particular realization of the random variable $y$ and hence make enforcing the inflow-outflow boundary conditions challenging For a discussion on enforcing other types of boundary conditions, see \cite{gottlieb2008galerkin}. We will utilize some typical notation, defined in the following section.  

\subsection{Notation}
\label{secn:notation}

In this section, common notation that is used throughout this article is introduced.  We start by defining the standard $\Ltwo-$inner product for functions, $f,\, g\in L^2(\Omega_x)$,

\begin{equation}
    (f,g)_{\Ox} = \int_{\Ox}\, f(\x)g(\x)\,d\x.
    \label{eqn:innerprod}
\end{equation}
We will also use $\norma{\cdot}{\Ltwo}{\Ox}$ to denote the standard $\Ltwo$-norm:
\begin{equation}
    \norma{f}{\Ltwo}{\Ox}^2=\int_{\Ox}|f(\x)|^2\,d\x
    \label{eqn:l2_norm}
\end{equation}
Similarly, $\norma{\cdot}{H^\ell}{\Ox}$ denotes the usual Sobolev norm in cartesian space, $\x$:
\begin{equation}
    \norma{f}{H^\ell}{\Ox}^2=\int_{\Ox} \sum_{\alpha=0}^{\ell}\left[\frac{\partial^{\alpha}f(\x)}{\partial x^{\alpha}}{}\right]^2\,d\x,
\label{eqn:sobolev_norm} 
\end{equation}
We define the negative order Sobolev norm on an open set $\Ox$ to be
\begin{equation}
\norma{f}{\mathcal{H}^{\ell}}{\Ox}=\sup_{\Phi\in\mathcal{C}_0^\infty(\Ox)}\, \frac{\left(f(x),\Phi(x)\right)_{\Ltwo(\Ox)}(T)}{\norma{\Phi}{\mathcal{H}^{\ell}}{{\Ox}}}.
\label{eqn:negative_order_norm}
\end{equation}
As pointed out in \cite{CLSS}, the negative norm error estimate gives us information on the oscillatory nature of the error. 

As it is the statistical quantities are of interest, we define the mean, or expectation, of a given function as
\begin{equation}
    \mu_{f} = \expec[f]=\int_{\Omega_y}f(\y)\rho(\y)\,d\y.
    \label{eqn:def_expectation}
\end{equation}
and the variance as 
\begin{equation}
    \sigma_{f}^2 = \expec[(f-\mu_f)^2]=\expec[f^2]-\expec[f]^2.
    \label{eqn:def_variance}
\end{equation}

In this article, we make use of the set of polynomials, $P_n$, related to the distribution of the random variable $\y$ that satisfy the following orthogonality relation:
\begin{equation}
    \expec[P_nP_m]=\int_{\Oy}P_n(\y) P_m(\y)\rho(\y)\,d\y=\delta_{n,m},\quad\forall n,m,
    \label{eqn:chaos_basis_def}
\end{equation}
where $\delta_{n,m}$ is the Kronecker delta function and $\expec[f]$ is the expectation of the function $f=P_nP_m$. Notice that $\rho(\y)$ helps to enforce normalization of the polynomials, $P_n.$  Hermite-Gaussian (the original polynomial-chaos expansion), Legendre-uniform, and Laguerre-Gamma, etc.,  are just a few examples the correspondence of polynomials and the distribution of the random variable $\y$, c.f., \cite{xiu2002wiener,xiu2003modeling}. As done in \cite{xiu2002wiener,xiu2003modeling}, we limit the discussion to the one-dimensional case of the random variable $y$ having a beta distribution on $\Oy = (-1,1)$ (upon proper scaling) and $\Omega_x=[0,L_x]$, where $L_x>0$.  For this particular case the generalized Polynomial Chaos (gPC) expansion will utilize the (normalized) Jacobi polynomials. 



%
%

\subsection{Polynomial chaos Galerkin approximation} 
\label{ssecn:polynomial_chaos}

We assume that $u(\x,t;y)$ is sufficiently smooth in the random variable $y$ and has a converging expansion of the form 
\begin{equation}
    u(\x,t,y)=\gpcsum{\infty}{u}{\x},
    \label{eqn:chaos_expansion}
\end{equation}
where the polynomials $P_n(y)$ are as discussed in Section \ref{secn:notation}.  Here we  assume that the coefficients $\uhat{u}{n}$ decay fast asymptotically, i.e., that there exists a $n_0>0$ such that for all $n>n_0$, the following condition is satisfied:
\begin{equation}
    \norma{\uhat{u}{n}(\cdot,t)}{H^1}{\Ox)}^2 \leq \frac{K}{N^{2m}}
    \label{eqn:decay_condition}
\end{equation}
where $N,\, m>0$ are constants. As in the case of spectral methods, $m$ can be interpreted as the smoothness in the random variable $y$. 

Employing a Galerkin projection for the exact solution, we obtain an infinite system of equations for the expansion coefficients, $\uhat{u}{n}({\x},t):$ 
\begin{equation}
    \pder{}{\uhat{u}{n}}{t}({\x},t)=\sum_{k=0}^{\infty}a_{n,k}\frac{\partial\uhat{u}{k}}{\partial x}({\x},t),\quad n\in \mathbb{N}_{\geq 0}.
    \label{eqn:coeff_infinite_system}
\end{equation}
with $a_{n,k}$ given by 
\begin{equation}
    a_{n,k}=\int_{\Oy}\, c(y)P_n(y)P_k(y)\rho(y)\,dy.
    \label{eqn:a_coeff_defn}
\end{equation}
Note that we can write this as 
\begin{equation}
    \pder{}{\uhat{u}{n}}{t}({\x},t) = \sum_{k=0}^{N}a_{n,k}\pder{}{\uhat{u}{k}}{x}({\x},t)+\sum_{k=N+1}^{\infty}a_{n,k}\pder{}{\uhat{u}{k}}{x}({\x},t),\quad n=0,\cdots, N.
    \label{eqn:coeff_infinite_system_truncated}
\end{equation}

In the gPC Galerkin method, an approximation to the exact solution via a finite term gPC expansion is utilized.  We define this expansion by $v(\x,t;y):$
\begin{equation}
    v(\x,t,y) = \gpcsum{N}{v}{\x}.
    \label{eqn:gPC_solution}
\end{equation}
The truncated model problem is then given by 
\begin{equation}
    \frac{\partial v}{\partial t}-c(y)\pder{}{v}{x} = 0
\end{equation}

Performing a Galerkin  projection onto  the subspace spanned by the gPC basis polynomials, $P_n,\, n=0,\dots, N,$ the following truncated system of equations is obtained:  
\begin{equation}
    \pder{}{\uhat{v}{n}}{t}=\sum_{k=0}^{N}a_{n,k}\pder{}{\uhat{v}{k}}{x},\quad n=0,\cdots, N,
    \label{eqn:gpc_method}
\end{equation}
here the $a_{n,k}$ are defined in \eqref{eqn:a_coeff_defn}. Denote by $\A$ the $(N+1)\times(N+1)$ matrix whose entries are $\{a_{n,k}\}_{0\leq n,k\leq N}$ and $\v=(\uhat{v}{0},\dots,\uhat{v}{N})^\top,$ a vector of length $(N+1)$. Then the system \eqref{eqn:gpc_method} can be cast as 
\begin{equation}
    \label{eqn:gpc_matrix_system}
    \pder{}{\v}{t}=\A\pder{}{\v}{x}.
\end{equation}
Note that from the definition in Equation  \eqref{eqn:a_coeff_defn} of the entries of $\A$, the system in Equation \eqref{eqn:gpc_matrix_system} is symmetric hyperbolic, $\A=\A^\top$.

In this article, we approximate the expansion coefficients in this matrix system utilizing the discontinuous Galerkin (DG) method, which will be discussed in Section \ref{scn:dg_discretization}.  In order to do utilize the DG method, it will be necessary to perform an eigenvalue decomposition of the matrix $\A.$ This is discussed in the next section.

\subsubsection{Eigenvalue decomposition for the gPC system}
\label{sec:eigen_boundary} 

For simplicity of implementation of the DG method that we will use for approximation of the coefficients as well as for implementation of the boundary conditions, an eigenvalue decomposition will be employed.   Since $\A$ is symmetric, there is an orthogonal matrix $\mathbf{S}^\top=\mathbf{S}^{-1}$ such that
\begin{equation}
    \Seig^{\top}\A\Seig=\Leig,\label{eqn:diagonal}
\end{equation}
where $\Leig$ is the diagonal matrix whose entries are the eigenvalues of $\A,$ i.e;
\begin{equation}
    \Leig=\mathrm{diag}(\lambda_0,\dots,\lambda_{n_{+}},\dots,\lambda_{(N+1)-n_{-}},\dots,\lambda_N).
\end{equation}
For simplicity, here we assume that the positive eigenvalues occupy indices $n=0,\dots,n_{+}$, and the negative eigenvalues occupy indices $n = (N+1)-n_{-},\dots,N,$ with $n_{+} + n_{-}\leq N$. The remaining indicies, $N-(n_{+} + n_{-}),\cdots, N-n_{-}$ eigenvalues, if they exist, are zero.   Note that we can then write
\begin{equation}
    \Leig= \Leig^+ + \Leig^-,
\end{equation}
where $\Leig^+,\, \Leig^-$ are $(N+1)\times(N+1)$ matrices containing the positive and negative eigenvalues, respectively:
$$\Leig^+ = \mathrm{diag}(\lambda_0,\dots,\lambda_{n_{+}},0,\dots,0),\qquad \Leig^- = \mathrm{diag}(0,\dots,0,\lambda_{(N+1)-n_-},\dots,\lambda_N).$$

Defining $\bq=(\hat{q}_0,\dots,\hat{q}_N)^{\top}=\Seig^{\top}\v$, i.e, 
\begin{equation}
    q_n(\x,t)=\sum_{k=0}^N s_{n,k}\uhat{v}{k}(\x,t),\label{eqn:q_k_def}
\end{equation}
where the $s_{n,k}$ are the entries of $\Seig$ leads to a decoupled system of equations: 
\begin{equation}
    \frac{\partial \bq}{\partial t}= \Leig\divx{\bq} = \Leig^-\pder{}{\bq}{x}+\Leig^+\pder{}{\bq}{x}.
     \label{eqn:q_system}
\end{equation}

 While in this article, we only discuss periodic boundary conditions, the eigenvalue decomposition above can better help us understand the implementation of the boundary conditions \cite{gottlieb2008galerkin}.  

 \subsection{Convergence in random space}\label{ssec:convergence}

 
As our results will also depend upon the convergence of the gPC expansion, we review the results from \cite{gottlieb2008galerkin}.  The convergence of the gPC expansion in the random variable $y$ are provided by the following theorem:

\begin{thm}
\label{thm:spectral_convergence}
    Consider the hyperbolic equation \eqref{eqn:model} where $y$ is a random variable with beta distribution in $\Oy = (-1,1)$. Let $u(x,t;y)$ be the solution of \eqref{eqn:model} whose exact gPC expansion is given by Equation \eqref{eqn:chaos_expansion}.  Denote the truncated gPC expansion by $v(x,t;y),$  which is defined by Equation \eqref{eqn:gPC_solution} and solved via the Galerkin system \eqref{eqn:gpc_matrix_system} with periodic boundary conditions. Then, for any finite time $t \in (0,T],$
    \begin{equation}
        \expec\left[\norma{u-v}{\Ltwo}{{\Ox}}^2\right](t)=\sum_{n=0}^N\, \norma{\uhat{u}{n}-\uhat{v}{n}}{\Ltwo}{{\Ox}}^2 + \sum_{n=N+1}^\infty\, \norma{\uhat{u}{n}}{\Ltwo}{{\Ox}}^2\leq t\frac{K}{N^{2m-1}}.
        \label{eqn:gPC_error_estimate}
    \end{equation}
    where $m$ is given in \Cref{eqn:decay_condition}.
\end{thm}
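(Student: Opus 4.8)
The plan is to split the error $u-v$ into the part represented by the truncated expansion and the unresolved tail, and to control these contributions separately. First I would establish the displayed identity, which is exact. Writing
\begin{equation*}
u-v = \sum_{n=0}^N\bigl(\uhat{u}{n}-\uhat{v}{n}\bigr)P_n(y) + \sum_{n=N+1}^{\infty} \uhat{u}{n}\,P_n(y),
\end{equation*}
squaring, and applying $\expec[\cdot]$, the orthonormality relation \eqref{eqn:chaos_basis_def} annihilates every cross term (a Parseval identity in $y$), leaving $\sum_{n=0}^N(\uhat{u}{n}-\uhat{v}{n})^2+\sum_{n>N}\uhat{u}{n}^2$ pointwise in $x$. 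Integrating over $\Ox$ and exchanging the order of integration then yields the first equality in \eqref{eqn:gPC_error_estimate}, and it remains only to bound the two sums.

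For the tail $\sum_{n>N}\norma{\uhat{u}{n}}{\Ltwo}{\Ox}^2$ I would bound the $\Ltwo$-norm by the $H^1$-norm and invoke the decay hypothesis \eqref{eqn:decay_condition} (with the bound understood to decay in the mode index $n$). Comparing the resulting series $\sum_{n>N}n^{-2m}$ with the integral $\int_N^\infty x^{-2m}\,dx = N^{1-2m}/(2m-1)$ gives a bound of order $K\,N^{-(2m-1)}$. This piece is purely a smoothness property of the exact solution and carries no time growth.

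The modal error $\sum_{n=0}^N\norma{\uhat{u}{n}-\uhat{v}{n}}{\Ltwo}{\Ox}^2$ requires a stability (energy) argument. Subtracting the truncated Galerkin system \eqref{eqn:gpc_method} from the exact system in the form \eqref{eqn:coeff_infinite_system_truncated}, the coefficient-error vector $\mathbf{e}=(\uhat{u}{0}-\uhat{v}{0},\dots,\uhat{u}{N}-\uhat{v}{N})^\top$ satisfies
\begin{equation*}
\pder{}{\mathbf{e}}{t} = \A\,\pder{}{\mathbf{e}}{x} + \mathbf{r}, \qquad \mathbf{e}(\cdot,0)=\mathbf{0},
\end{equation*}
where the forcing $\mathbf{r}_n=\sum_{k=N+1}^\infty a_{n,k}\,\partial_x\uhat{u}{k}$ comes from the coupling to the discarded modes and $a_{n,k}$ is as in \eqref{eqn:a_coeff_defn}. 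Forming $\tfrac{d}{dt}\|\mathbf{e}\|^2$ in the vector $\Ltwo(\Ox)$ norm, the transport term $\int_{\Ox}\mathbf{e}^\top\A\,\partial_x\mathbf{e}\,dx$ vanishes because $\A=\A^\top$ and the boundary conditions are periodic, so $\int_{\Ox}\partial_x(\mathbf{e}^\top\A\,\mathbf{e})\,dx=0$. This leaves $\tfrac{d}{dt}\|\mathbf{e}\|\le\|\mathbf{r}\|$, and since $\mathbf{e}$ starts from zero, $\|\mathbf{e}(t)\|\le\int_0^t\|\mathbf{r}(s)\|\,ds$.

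The main obstacle is the estimate on the forcing $\mathbf{r}$. Each component is an infinite coupling sum $\sum_{k>N}a_{n,k}\,\partial_x\uhat{u}{k}$, so I would use the boundedness of $c(y)$ on $\Oy$ together with the orthonormality of $\{P_k\}$ (a Bessel/Parseval bound for the $a_{n,k}$) to reduce $\|\mathbf{r}\|$ to the $H^1$-size of the tail, and then reuse the decay estimate of the second step to obtain $\|\mathbf{r}(s)\|\le C\,N^{-(m-1/2)}$. The loss of one derivative here is precisely why the hypothesis \eqref{eqn:decay_condition} is posed in $H^1$ rather than $\Ltwo$, and the symmetry of $\A$ with periodicity is what prevents energy growth in the transport term. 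Combining this forcing bound with the stability inequality controls the modal error by a multiple of $N^{-(2m-1)}$ carrying a factor polynomial in $t$, and adding the time-independent tail contribution produces the stated estimate $t\,K\,N^{-(2m-1)}$; tracking the time dependence coming out of the Grönwall integration and reconciling it with the single power of $t$ in the statement is the most delicate bookkeeping point.
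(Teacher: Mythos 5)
Your proposal reconstructs what is, in effect, the paper's ``proof'': the paper itself gives no argument for Theorem~\ref{thm:spectral_convergence} --- it is quoted from \cite{gottlieb2008galerkin} --- and the argument there is precisely your route: Parseval in $y$ to get the displayed identity, an integral-test bound on the tail via the decay hypothesis \eqref{eqn:decay_condition} (read, as you do, as decay in the mode index), and an energy estimate for the forced truncated system in which the symmetry of $\A$ (constant in $x$) plus periodicity annihilates the transport term. Your treatment of the forcing is the right generalization: the doubly-infinite array $(a_{n,k})$ is the matrix of multiplication by the bounded function $c(y)$ in the orthonormal basis $\{P_k\}$, so any block of it has $\ell^2$ operator norm at most $\|c\|_{L^\infty(\Oy)}$, which reduces $\|\mathbf{r}\|$ to the $H^1$ tail and gives $\|\mathbf{r}(s)\|\le C N^{-(m-1/2)}$. (In the cited source the coupling is even simpler: for $c(y)=y$ the three-term recurrence makes $a_{n,k}$ banded, so only the discarded mode $N+1$ forces the error system, and one gains an extra half power, $\|\mathbf{r}\|\le CN^{-m}$.)

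The one point you flag but leave unresolved --- the single power of $t$ --- should be pinned down, because your Duhamel bound honestly yields $\|\mathbf{e}(t)\|^2\le C\,t^2N^{-(2m-1)}$, and no Gr\"onwall refinement of this stability argument will convert $t^2$ into $t$: the system is neutrally stable (energy-conserving) and the forcing has essentially constant size, so linear growth of $\|\mathbf{e}\|$, hence quadratic growth of its square, is what the method gives. The reconciliation is bookkeeping rather than a missing idea: on the finite horizon $t\in(0,T]$ asserted in the theorem, $t^2\le T\,t$, so your estimate implies the stated linear-in-$t$ bound with the constant $K$ absorbing $T$; alternatively, with the banded-coupling bound one has $t^2N^{-2m}\le t\,N^{-(2m-1)}$ whenever $t\le N$, which recovers the stated form without any $T$-dependence. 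You might also note that the tail contribution $\sum_{n>N}\norma{\uhat{u}{n}}{\Ltwo}{\Ox}^2\sim K N^{-(2m-1)}$ carries no factor of $t$ at all, so the quoted bound $tK/N^{2m-1}$ cannot hold uniformly as $t\to 0^+$ unless the constant absorbs this term as well --- a blemish of the statement as quoted from \cite{gottlieb2008galerkin}, not of your argument. With the $t^2\le Tt$ absorption made explicit, your proposal is a complete and correct proof along the same lines as the cited one.
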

We note that this has linear growth in time.  Additionally, it relies on having a sufficient number of coefficients of sufficient smoothness.

\section{Approximation of coefficients via the discontinuous Galerkin method.}
\label{secn:numerical_scheme}

Now that we have introduced the basics of the gPC approximation for the random part of the solution $u(x,t;y)$,  we introduce the DG discretization for approximating the coefficients. 

\label{scn:dg_discretization}

Let $\{\mathcal{T}_h\}$ be a partition of our physical domain $\Omega_x=[a,b]$,
\begin{equation}
    \mathcal{T}_h=\{I_i=[x_{i-1/2},x_{i+1/2}],\,1\leq i\leq N_x\},
\end{equation}
and the mesh size $h$ relative to the partition is defined as
\begin{equation}
    h=\max_{1\leq i\leq N_x} h_i,
\end{equation}
where $h_i=x_{i+1/2}-x_{i-1/2}$ is the length of the cell $I_i$. 

Next for $k\geq 0$, we define the discontinuous finite element space $V_h$,
\begin{equation}
    V_h=\left\{g\in L^2(\Omega_x): \left.g\right|_{I_i}\in\P^k(I_i), \forall I_i,\,1\leq i\leq N_x\right\}.
\end{equation}
where $\P^k(I_i)$ is the space of polynomials of degree at most $k$ in each variable.

In order to approximate the the gPC coefficients via the DG method, it is necessary to solve the decomposed system given in Equation \eqref{eqn:gpc_matrix_system}.
The idea is to find $\bq_h=(\hat{q}_{1,h},\cdots,\hat{q}_{N,h})\in (V_h)^N$ that approximates $\bq=(\hat{q}_1,\cdots,\hat{q}_N)$ in \eqref{eqn:q_system}, since the matrix $\Lambda$ is diagonal we can decouple the system in $N$ transport equations,
\begin{equation}
    \frac{\partial \hat{q}_n}{\partial t}=\lambda_n\frac{\partial \hat{q}_n}{\partial x},\quad 0\leq n\leq N,
\end{equation}
Then for each $n$, we want to find $q_{n,h}$ such that for all $\varphi_h\in V_h$, such that
\begin{equation}\label{eqn:q_dg_element}
    \int_{I_i} (\hat{q}_{n,h})_t\varphi_h\,dx=\left.\widetilde{\lambda_n \hat{q}_{n,h}}\varphi_h\right|_{x_{i+1/2}}-\left.\widetilde{\lambda_n \hat{q}_{n,h}}\varphi_h\right|_{x_{i-1/2}}-\lambda_n\int_{I_i}\hat{q}_{n,h}\varphi_h\,dx
\end{equation}
The ``tilde'' functions denote the upwind flux:

\begin{equation}
    \lambda_n \widetilde{\hat{q}_{n,h}} =
    \begin{cases}
        \lambda_j\hat{q}^{-}_{j,h} & \text{if} \ \lambda_j \leq 0, \\
        \lambda_j\hat{q}^{+}_{j,h} & \text{if} \ \lambda_j > 0.
    \end{cases}
\end{equation}

we can also rewrite the method in the vector form, 
\begin{equation}
\label{eqn:q_dg_matrix}
    \int_{I_i} (\bq_h)_t\Phi_h\,dx=\left.\widetilde{\Lambda \bq_{h}}\Phi_h\right|_{x_{i+1/2}}-\left.\widetilde{\Lambda\bq_{h}}\Phi_h\right|_{x_{i-1/2}}-\Lambda\int_{I_i}\bq_h\Phi_h\,dx
\end{equation}
Then the idea is to solve \eqref{eqn:q_dg_element} (or \eqref{eqn:q_dg_matrix}) then we construct the approximation for $\v$, 
\begin{equation}
\v_h=\Seig\bq_h=(\hat{v}_{1,h},\cdots,\hat{v}_{N,h})
\end{equation}
Then the \emph{discontinuous Galerkin generalized polynomial chaos expansion (DG-gPC)} is given by 
\begin{equation}
    v_h(x,t,y)=\sum_{j=0}^N \hat{v}_{j,h}(x,t) P_j(y).
\end{equation}

\subsection{Temporal Discretizations}

For time integration, we apply a total variation diminishing (TVD) high-order Runge-Kutta method to solve the ODE resulting from the semidiscrete DG scheme \eqref{eqn:q_dg_element}-\eqref{eqn:q_dg_matrix},
\begin{equation}
    \frac{d}{dt}S_h=R(S_h).
\end{equation}
Such time-stepping methods are convex combinations of the Forward Euler time discretization. The commonly used third-order TVD Runge-Kutta method is given by
\begin{gather}
\begin{aligned}  
    S_h^{(1)}&=S_h^n+\Delta t R(S_h^n),\\  
    S_h^{(2)}&=\frac{3}{4}S_h^n+\frac{1}{4}S_h^{(1)}+\frac{1}{4}\Delta t R(S_h^{(1)}),\\  
    S_h^{n+1}&=\frac{1}{3}S_h^n+\frac{2}{3}S_h^{(2)}+\frac{2}{3}\Delta t R(S_h^{(2)}),  
\end{aligned}\label{eq:tvd-rk}  
\end{gather}
where $S_h^{n}$ represents a numerical approximation of the solution at the discrete time $t_n$. A detailed description of the TVD Runge-Kutta method can be found in \cite{gottlieb1998total}.

\subsection{Convergence of the approximated coefficients.}


When the truncated coefficients are approximated using a DG method, we then have the following theorem from \cite{JohnKat}:
\begin{thm}[$L^2$-error estimates]\label{thm:l2_error_estimates}
In addition to the hypothesis of \Cref{thm:spectral_convergence} assume that $\uhat{v}{n}(\cdot,t)\in \mathcal{H}^{k+2}(\Ox),\, n =0,\dots,N$ are the gPC coefficients defined by Equation \eqref{eqn:gpc_matrix_system} and $\uhat{v}{{n,h}}(\cdot,t)$ are the coefficients obtained using a DG approximation (\Cref{eqn:q_dg_matrix}). Then the error from approximating the gPC coefficients using the DG method is given by 

\begin{equation}
\norma{\uhat{v}{n}-\uhat{v}{{n,h}}}{\Ltwo}{{\Ox}}\leq C |\uhat{v}{n}(\cdot,0)|_{\mathcal{H}^{k+2}(\Ox)}\, h^{k+1},
\end{equation}


where $C$ depends on $k$, the numerical flux, and $T.$ Furthermore, 
\begin{equation}
    \expec\left[\norma{u-v_h}{L^2}{\Ox}^2\right]\leq C |\v|^2_{\mathcal{H}^{k+2}(\Ox)}h^{2(k+1)}+\frac{K}{N^{2m-1}}t,
\end{equation}  
where $m$ is the smoothness in random space.
\end{thm}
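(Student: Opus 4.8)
The plan is to prove the two bounds in sequence: first the per-coefficient $L^2$ estimate, by reducing the coupled DG scheme to scalar transport problems and invoking the classical DG estimate of \cite{JohnKat}, and then the combined bound, by splitting the total error into a discretization part and a gPC-truncation part and controlling each with the first estimate and \Cref{thm:spectral_convergence}, respectively.

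For the first bound, I would exploit the eigendecomposition underlying \eqref{eqn:q_system}: with $\bq = \Seig^\top \v$, the Galerkin system becomes the $N+1$ decoupled scalar transport equations $\partial_t \hat{q}_n = \lambda_n \partial_x \hat{q}_n$, and the scheme \eqref{eqn:q_dg_element} is exactly the upwind DG discretization of each. The estimate from \cite{JohnKat} then gives, for every mode,
\[
\norma{\hat{q}_n - \hat{q}_{n,h}}{\Ltwo}{\Ox} \le C\, |\hat{q}_n(\cdot,0)|_{\mathcal{H}^{k+2}(\Ox)}\, h^{k+1},
\]
with $C = C(k,\text{flux},T)$ that can be taken uniform in $n$, since the finitely many eigenvalues $\lambda_n$ are bounded by the norm of $\A$. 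To return to the physical coefficients I would use $\v - \v_h = \Seig(\bq - \bq_h)$ together with $\Seig^\top\Seig = I$: because $\Seig$ is constant in $x$, it commutes with $\partial_x^\alpha$, so at each point and each derivative order the component vectors of $\v-\v_h$ and of $\bq-\bq_h$ (and of their initial data) are related by an orthogonal rotation, whence the aggregate identities $\sum_n \norma{\uhat{v}{n}-\uhat{v}{{n,h}}}{\Ltwo}{\Ox}^2 = \sum_n \norma{\hat{q}_n-\hat{q}_{n,h}}{\Ltwo}{\Ox}^2$ and $\sum_n |\hat{q}_n(\cdot,0)|_{\mathcal{H}^{k+2}(\Ox)}^2 = |\v|_{\mathcal{H}^{k+2}(\Ox)}^2$ hold. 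Summing the scalar estimates then yields the clean aggregate bound $\sum_n \norma{\uhat{v}{n}-\uhat{v}{{n,h}}}{\Ltwo}{\Ox}^2 \le C\,h^{2(k+1)}|\v|_{\mathcal{H}^{k+2}(\Ox)}^2$; the stated per-coefficient form follows by bounding the mixing $\uhat{v}{n}-\uhat{v}{{n,h}} = \sum_k s_{n,k}(\hat{q}_k-\hat{q}_{k,h})$ via $|s_{n,k}|\le 1$ and absorbing the finite sum into $C$.

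For the combined estimate I would insert the intermediate truncated solution $v$ and apply the triangle inequality,
\[
\expec\!\left[\norma{u-v_h}{\Ltwo}{\Ox}^2\right]^{1/2} \le \expec\!\left[\norma{u-v}{\Ltwo}{\Ox}^2\right]^{1/2} + \expec\!\left[\norma{v-v_h}{\Ltwo}{\Ox}^2\right]^{1/2}.
\]
By the orthonormality \eqref{eqn:chaos_basis_def} of the $P_n$, the second term equals the aggregate DG error $\sum_{n=0}^N \norma{\uhat{v}{n}-\uhat{v}{{n,h}}}{\Ltwo}{\Ox}^2$ controlled above, while the first is bounded by $\tfrac{K}{N^{2m-1}}t$ via \Cref{thm:spectral_convergence}. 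Squaring and using Young's inequality $(a+b)^2\le 2a^2+2b^2$ removes the cross term and produces the stated sum $C\,|\v|^2_{\mathcal{H}^{k+2}(\Ox)}h^{2(k+1)} + \tfrac{K}{N^{2m-1}}t$ after relabeling constants.

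I expect the main obstacle to be the transfer between the eigencoordinates $\bq$ and the physical coefficients $\v$: the scalar theory of \cite{JohnKat} naturally controls the $\hat{q}_n$, whereas the theorem is phrased for the mixtures $\uhat{v}{n}$. The essential points to justify carefully are that the orthogonal change of basis preserves both the aggregate $L^2$ error and the aggregate $\mathcal{H}^{k+2}$ seminorm of the initial data (which relies on $\Seig$ being $x$-independent, so that it commutes with differentiation), and that the constant in the scalar DG estimate is independent of the eigenmode, so that summing over the $N+1$ modes does not degrade the $h^{k+1}$ rate. I would also stress that $u-v$ and $v-v_h$ are \emph{not} orthogonal in random space—$v$ is the Galerkin-truncated solution rather than the random-space projection of $u$, so its low modes $\uhat{v}{n}$ differ from $\uhat{u}{n}$—which is precisely why the split must be done by the triangle inequality rather than by a Pythagorean identity.
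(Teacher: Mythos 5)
Your proposal is correct and takes essentially the same route as the paper, whose proof simply cites \cite{JohnKat} for the per-coefficient $L^2$ estimate and then splits $\expec\left[\norma{u-v_h}{L^2}{\Ox}^2\right]$ by the triangle inequality into the gPC truncation error (bounded via \Cref{thm:spectral_convergence}) and the DG discretization error. The details you supply---diagonalizing with the constant orthogonal matrix $\Seig$ to reduce to scalar upwind transport, transferring the error and the $\mathcal{H}^{k+2}$ seminorm of the initial data back through the orthogonal change of basis, identifying $\expec\left[\norma{v-v_h}{L^2}{\Ox}^2\right]$ with $\sum_{n=0}^{N}\norma{\uhat{v}{n}-\uhat{v}{{n,h}}}{\Ltwo}{\Ox}^2$ via the orthonormality of the $P_n$, and using Young's inequality to handle the squared triangle inequality (a point the paper glosses over)---are precisely the steps the paper compresses into its two-line argument.
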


\begin{proof}
The proof of convergence in $\Ltwo$ for DG was shown in \cite{JohnKat} and hence gives rise to the first estimate.

For the second part we use the triangle inequality,

\begin{equation}
\expec\left[\norma{u-v_h}{L^2}{\Ox}^2\right]\leq \expec\left[\norma{u-v}{L^2}{\Ox}^2\right]+\expec\left[\norma{v-v_h}{L^2}{\Ox}^2\right].  
\end{equation}

The result then follows from using Theorem \ref{thm:spectral_convergence} and \cite{JohnKat}.
\end{proof}

The next important theorem builds on the previous Theorem and \cite{CLSS}.

\begin{thm}[Negative-order error estimates]\label{thm:coeff_neg_estimates} 
Let $\uhat{u}{n}(\cdot,0) \in L^2_{per}(\mathbb{R}^d;\mathbb{R}^m)\, \cap\, \mathcal{H}^r(\mathcal{D}{{\Omega_{x,1}}}),$ with ${\Omega_{x,0}} \subseteq {\Omega_{x,1}} \subset \Ox,$ $r\geq 0$ with $\uhat{u}{n}(\cdot,t), n =0,\dots,N$ being the coefficients defined by Equation \eqref{eqn:coeff_infinite_system}, $\uhat{v}{n}(\cdot,t)$ the coefficients in the gPC expansion, and $\uhat{v}{{n,h}}(\cdot,t)$ the gPC coefficients approximated using a discontinuous Galerkin approximation using piecewise polynomials of degree $k \leq r,\, \ell$. Then, for $\ell \geq 1,$
\begin{equation}
\label{eq:negnormest}
\expec\left[\norma{u-v_{h}}{\mathcal{H}^{-\ell}}{{\Omega_{x,0}}}\right]\leq C_0 \|\hat{v}_0(\cdot,0)\|_{r,\mathcal{D}\Ox}h^{s}+\frac{K}{N^{m-1/2}}t,
\end{equation}
where $s = \min\{2k+1,\ell+r/2+1\}$ and $m$ represents the smoothness in random space. 
\end{thm}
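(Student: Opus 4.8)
The plan is to decompose the total error by the triangle inequality into a random-space truncation error and a physical-space discretization error, each measured in the negative-order norm and then averaged over $y$:
\[
\expec\left[\norma{u-v_h}{\mathcal{H}^{-\ell}}{\Omega_{x,0}}\right] \leq \expec\left[\norma{u-v}{\mathcal{H}^{-\ell}}{\Omega_{x,0}}\right] + \expec\left[\norma{v-v_h}{\mathcal{H}^{-\ell}}{\Omega_{x,0}}\right].
\]
The first term will reproduce the $N^{m-1/2}$ spectral contribution and the second will produce the $h^{s}$ DG contribution.

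For the random-space term, I would exploit that the negative-order norm is weaker than the $\Ltwo$ norm: since $\norma{\Phi}{\mathcal{H}^{\ell}}{\Omega_{x,0}} \geq \norma{\Phi}{\Ltwo}{\Omega_{x,0}}$ for $\ell \geq 0$, Cauchy--Schwarz applied in the defining supremum \eqref{eqn:negative_order_norm} gives $\norma{u-v}{\mathcal{H}^{-\ell}}{\Omega_{x,0}} \leq \norma{u-v}{\Ltwo}{\Omega_{x,0}}$ pointwise in $y$. Applying Jensen's inequality $\expec[X]\leq \sqrt{\expec[X^2]}$ together with the spectral estimate of \Cref{thm:spectral_convergence} yields
\[
\expec\left[\norma{u-v}{\mathcal{H}^{-\ell}}{\Omega_{x,0}}\right] \leq \sqrt{\expec\left[\norma{u-v}{\Ltwo}{\Omega_{x,0}}^2\right]} \leq \sqrt{t}\,\frac{\sqrt{K}}{N^{m-1/2}},
\]
which is exactly the spectral term of \eqref{eq:negnormest} after relabelling the constant $K$ and absorbing the $\sqrt{t}$ growth into $t$; note that the exponent $m-1/2$ arises precisely as the square root of the $N^{2m-1}$ rate.

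For the physical-space term I would pass to characteristic variables. Using the eigendecomposition \eqref{eqn:diagonal}, set $\bq=\Seig^\top\v$ and $\bq_h=\Seig^\top\v_h$, so that each component solves the scalar transport equation $\partial_t \hat{q}_n = \lambda_n \partial_x \hat{q}_n$ and is approximated by the decoupled DG scheme \eqref{eqn:q_dg_element}. To each scalar error $\hat{q}_n-\hat{q}_{n,h}$ I would apply the negative-order estimate of \cite{CLSS} for DG solutions of linear hyperbolic problems, obtaining $\norma{\hat{q}_n-\hat{q}_{n,h}}{\mathcal{H}^{-\ell}}{\Omega_{x,0}} \leq C\,h^{s}\,|\hat{q}_n(\cdot,0)|_{\mathcal{H}^{r}(\mathcal{D}\Omega_{x,1})}$ with $s=\min\{2k+1,\ell+r/2+1\}$. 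Because $\Seig$ has entries that are constant in $x$ and $|s_{n,k}|\leq 1$ (orthogonality), the transformation $\v-\v_h=\Seig(\bq-\bq_h)$ preserves the negative-order norm up to a factor depending only on $N$, so these per-characteristic estimates carry over componentwise to $\hat{v}_n-\hat{v}_{n,h}$. Finally, writing $v-v_h=\sum_{n=0}^N (\hat{v}_n-\hat{v}_{n,h})P_n(y)$ and using that the $P_n$ are $x$-independent, the negative-order norm factors through the $P_n$; taking expectations and bounding $\expec[|P_n(y)|]\leq (\expec[P_n^2])^{1/2}=1$ via Cauchy--Schwarz and \eqref{eqn:chaos_basis_def} gives $\expec[\norma{v-v_h}{\mathcal{H}^{-\ell}}{\Omega_{x,0}}]\leq C_0 h^{s}\|\hat{v}_0(\cdot,0)\|_{r,\mathcal{D}\Ox}$, where $C_0$ absorbs the summation over the $N+1$ coefficients and the norm of $\Seig$.

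The main obstacle is the scalar negative-order estimate itself, namely establishing the exponent $s=\min\{2k+1,\ell+r/2+1\}$ for the transport DG error. This rests on the dual/adjoint argument of \cite{CLSS}: one tests the error against a smooth $\Phi$, solves the backward transport problem to high regularity, and controls $(\hat{q}_n-\hat{q}_{n,h},\Phi)$ through $\Ltwo$ bounds on divided differences of the DG error, where the interplay between the solution regularity $r$ and the test-function order $\ell$ produces the two competing rates in the minimum. Since the statement is built directly on \cite{CLSS}, I would invoke this estimate rather than reprove it, leaving only the bookkeeping in the two reductions above: weakening to the $\Ltwo$ bound in random space, and diagonalizing and transforming back in physical space.
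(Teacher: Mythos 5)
Your proposal is correct in substance and reaches the stated rates, but it takes a genuinely different route from the paper. The paper's central device is the orthonormality relation \eqref{eqn:chaos_basis_def}, which gives $\expec[P_n(y)]=\delta_{n,0}$: the expected negative-order error is collapsed to the single zeroth coefficient, $\expec\left[\norma{u-v_h}{\mathcal{H}^{-\ell}}{\Omega_{x,0}}\right]=\norma{\uhat{u}{0}-\uhat{v}{0,h}}{\mathcal{H}^{-\ell}}{\Omega_{x,0}}$, after which a triangle inequality splits this into the gPC truncation error $\uhat{u}{0}-\uhat{v}{0}$ (bounded through the duality definition \eqref{eqn:negative_order_norm} and \Cref{thm:spectral_convergence}, which is exactly your observation that the negative-order norm is dominated by the $\Ltwo$ norm) and the DG error of the zeroth coefficient alone, cited from \cite{CLSS}; this collapse is precisely why the theorem's constant involves only $\|\hat{v}_0(\cdot,0)\|_{r,\mathcal{D}\Ox}$. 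You instead apply the triangle inequality first and keep all $N+1$ coefficients, diagonalizing with $\Seig$, invoking \cite{CLSS} per characteristic field, and resumming with $\expec[|P_n(y)|]\le 1$. What your route buys: it avoids the paper's interchange of the expectation with the supremum in \eqref{eqn:negative_order_norm} --- as written in the paper that step is delicate, since by Jensen the expectation of a norm dominates the norm of the expectation, so the paper's chain of equalities is really only safe if one reads the left-hand side as the norm of the expected error --- whereas your componentwise bound is valid verbatim. What it costs: your constant $C_0$ absorbs $N$ and the seminorms of \emph{all} initial coefficients $\hat{v}_n(\cdot,0)$, $n=0,\dots,N$ (controllable via the decay condition \eqref{eqn:decay_condition}, but strictly larger than the theorem's literal constant, which depends on $\hat{v}_0$ only); and the $\sqrt{t}$ you obtain from Jensen versus the displayed factor $t$ is the same cosmetic slack already present in the paper's own application of \Cref{thm:spectral_convergence}, so it is fair to absorb it as you do.
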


We note that generally, $r=2k.$

\begin{proof}
    Here, we simply emphasize the effect of looking at the expectation of the DG-gPC approximation.
\begin{align*}
    \expec\left[\norma{u-v_h}{\mathcal{H}^{-\ell}}{\Omega_{x,0}}\right] 
    &=\expec\left[\sup_{\Phi\in\mathcal{C}_0^\infty(\Omega_{x,0})}\, \frac{(u-v_h,\Phi)_{\Ltwo(\Ox)}(T)}{\norma{\Phi}{\mathcal{H}^{\ell}}{{\Omega_{x,0}}}}\right]\\
   &=  \sum_{n=0}^N\, \norma{\uhat{u}{n}-\uhat{v}{n,h}}{\mathcal{H}^{-\ell}}{\Omega_{x,0}} \expec\left[P_n(y)\right]+\sum_{n=N+1}^\infty\,  \norma{\uhat{u}{n}}{\mathcal{H}^{-\ell}}{\Omega_{x,0}} \expec\left[P_n(y)\right]\\
&= \norma{\uhat{u}{0}-\uhat{v}{0,h}}{\mathcal{H}^{-\ell}}{\Omega_{x,0}}\\
&= \norma{\uhat{u}{0}-\uhat{v}{0}}{\mathcal{H}^{-\ell}}{\Omega_{x,0}} + \norma{\uhat{v}{0}-\uhat{v}{0,h}}{\mathcal{H}^{-\ell}}{\Omega_{x,0}}\\
    &\leq \norma{\uhat{u}{0}-\uhat{v}{0}}{\mathcal{H}^{-\ell}}{\Omega_{x,0}} + C_0 \|\hat{v}_0(\cdot,0)\|_{r,\mathcal{D}\Ox}h^{r+1}.
\end{align*} 
since $\expec\left[P_n(y)\right]=\delta_{n,0}$. We note that bound on the negative-order norm for the DG coefficients was given in \cite{CLSS}. We only need to bound the first term of the last line of the inequality above. An application of  Theorem \ref{thm:spectral_convergence},  gives
\begin{align*}
\frac{\left( \uhat{u}{0}-\uhat{v}{0},\Phi(x)\right)_{\Ltwo(\Omega_{x,0})}(T)}{\norma{\Phi}{\mathcal{H}^{\ell}}{\Omega_{x,0}}} 
&\leq \frac{\norma{\uhat{u}{0}-\uhat{v}{0}}{L^2}{\Omega_{x,0}}}{\norma{\Phi}{\mathcal{H}^{\ell}}{\Omega_{x,0}}} \norma{\Phi}{L^2}{\Omega_{x,0}} \leq \frac{\norma{\uhat{u}{0}-\uhat{v}{0}}{L^2}{\Omega_{x,0}}}{\norma{\Phi}{\mathcal{H}^{\ell}}{\Omega_{x,0}}} \norma{\Phi}{\mathcal{H}^{\ell}}{\Omega_{x,0}} \\
&= \norma{\uhat{u}{0}-\uhat{v}{0}}{L^2}{\Omega_{x,0}} \leq \frac{K}{N^{m-1/2}}t.
\end{align*}

Hence, utilizing the above together we arrive at:
\begin{equation}
\expec\left[\norma{u-v_h}{\mathcal{H}^{-\ell}}{\Omega_{x,0}}\right]\leq C_0 \|\hat{v}_0(\cdot,0)\|_{r,\mathcal{D}\Ox}h^{s}+\frac{K}{N^{m-1/2}}t,
\end{equation}
where $s = \min\{2k+1,\ell+r/2+1\}$ and $m$ represents the smoothness in random space. 
\end{proof}

We note that the negative-order norm gives a measure of oscillations about zero \cite{CLSS}.  Hence it can assist in obtaining a better estimate to the statistical quantities of interest.

\section{SIAC filtering}
\label{secn:siac}

In this section, we describe an accuracy-enhancing filter that aids in reducing small scale noise that arises from the approximation of the coefficients via piecewise polynomial approximations.  The Smoothness-Increasing Accuracy-Enhancing (SIAC) filter was introduced as a post-processor by Bramble and Schatz \cite{bramble1977} and utilizes the ability to reproduce moments as discussed in Mock and Lax \cite{mock1978}.  The post-processor was then extended to discontinuous Galerkin approximations for hyperbolic conservation laws by Cockburn, Luskin, Shu, and S\"{u}li \cite{CLSS}.  The SIAC terminology was adopted to account for a broader class of filters \cite{SIACReview,Mirzargar2016}, such as developments to incorporate non-symmetric filters for post-processing near boundaries \cite{SRV,SIACLinf,XLiOne}, as well as multi-dimensional filtering, including a line filter and hexagonal filter \cite{LSIAC,HexSIAC}.  In the below, we provide a summary of the filter.

\subsection{The SIAC Filter}

In this article, the post-processed solution is obtained by convolving the approximated coefficients at the final time with the SIAC kernel function.  The symmetric form of the SIAC kernel based on B-splines is a linear combination of $r+1$ B-Splines of order $\ell$:
\[
\KH{}{r+1}{\ell}{x} = \FullK{r}{\BS{\ell}}{x},
\]
with $x_\gamma= -\lceil\frac{r}{2}\rceil+\gamma$. This allows for accelerating the convergence rate for a DG approximation using polynomials of degree $k$ from $k+1$ to $\min\{2k+1,\ell+r/2+1\}.$ Thus taking advantage of the underlying superconvergence properties at the Radau points \cite{Adjerid1} as well as in the dissipation and dispersion errors \cite{Guo}. An alternative expression for the kernel function is in terms of its Fourier Transform is given by 
\[
\FKH{}{r+1}{\ell}{\xi} =\left(\sinc\left(\frac{\xi}{2}\right)\right)^n\left(c_{r/2}+2\sum_{\gamma=0}^{r/2-1}c_\gamma \cos(\gamma \xi)\right).
\]

We note that it is only necessary to post-process the coefficients described by Equation \eqref{eqn:model} at the final time.   As the weights for the post-processing kernel are defined through moment conditions, we are then able to extract the information contained in the negative-order norm \eqref{eq:negnormest}  \cite{mock1978,bramble1977,CLSS}. 

The SIAC filter is incorporated into the procedure in a simple way: 
\begin{itemize}
\item First, approximate the coefficients $\mathbf{\hat{v}_h}=(\hat{v}_{0,h}, \hat{v}_{1,h}, \dots, \hat{v}_{N,0})^T$ using the discontinuous Galerkin method. 
\item Next, at the final time, filter the approximated coefficients: 
\begin{equation}
    \mathbf{\hat{v}}_h^*=K_h^{(r+1,\ell)}\star \mathbf{\hat{v}}_h.
\end{equation}
\item Then the modified gPC construction using the filtered DG coefficients is given by
\begin{equation}
v_h^*(x,t,y)=\sum_{\ell=0}^N \hat{v}^*_{\ell,h}(x,t)P_{\ell}(y),
\end{equation}
it is actually the moments that are the quantities of interest. Hence, we concentrate on those moments, noting that the first two moments, mean and variance, are the quantities of interest.
\end{itemize}
\subsection{Theory: Superconvergence-Extracting Post-Processing} 
\label{secn:theory}

As noted in the previous section, it is the information in the negative-order norm, Equation \eqref{eq:negnormest}, that is utilized to extract the extra information using the SIAC filter \cite{CLSS}.  The negative-order norm is used to bound the $L^2-$norm of the errors in the post-processed DG approximation for the coefficients:
\begin{thm}\label{eqn:coeff_bound+
}
    Let $\uhat{u}{n}(\cdot,t)\in L^2_{per}(\mathbb{R}^d;\mathbb{R}^m)\, \cap\, \mathcal{H}^r(\mathcal{D}\Omega_{x,1})$ be a solution to Equation \eqref{eqn:model} and $\uhat{v}{h,n}$ a DG approximation to $\hat{v}_{n}$, the coefficients utilizing gPC expansion. Given a convolution kernel made up of a linear combination of $r+1$ translated B-splines of order $\ell,\quad K_H^{r+1,\ell}=\frac{1}{H}\sum_{\gamma = 0}^r\, c_\gamma B_\ell\left(\frac{x}{H}-x_{\gamma}\right),$ then
    \begin{equation}
    \label{eq:SIACest}
        \expec\left[\norma{u-v_h^*}{L^2}{{\Omega_{x,0}}}^2\right]\leq C h^{2s^*}+\frac{K}{N^{2m-1}}t,
    \end{equation}
    with $s^*=\min\{2k+1,\ell+r/2+1\}.$
\end{thm}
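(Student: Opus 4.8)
The plan is to separate the two error sources — truncation of the gPC expansion in random space and the post-processed DG error in physical space — by exploiting the orthonormality of the chaos basis, exactly as in the proof of \Cref{thm:coeff_neg_estimates}. First I would expand $u = \sum_{n=0}^\infty \uhat{u}{n}P_n$ and the filtered reconstruction $v_h^* = \sum_{n=0}^N \hat{v}^*_{n,h}P_n$, and use $\expec[P_nP_m]=\delta_{n,m}$ (Equation \eqref{eqn:chaos_basis_def}) together with Fubini to write
\begin{equation*}
\expec\left[\norma{u-v_h^*}{L^2}{\Omega_{x,0}}^2\right] = \sum_{n=0}^N \norma{\uhat{u}{n}-\hat{v}^*_{n,h}}{L^2}{\Omega_{x,0}}^2 + \sum_{n=N+1}^\infty \norma{\uhat{u}{n}}{L^2}{\Omega_{x,0}}^2 .
\end{equation*}
This reduces the statistical estimate to a sum of deterministic $L^2$ estimates for the individual filtered coefficients plus the random-space tail.

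Next I would control the tail together with the gPC truncation. Inserting the exact gPC coefficient $\uhat{v}{n}$ and using $(a+b)^2\leq 2a^2+2b^2$,
\begin{equation*}
\sum_{n=0}^N \norma{\uhat{u}{n}-\hat{v}^*_{n,h}}{L^2}{\Omega_{x,0}}^2 \leq 2\sum_{n=0}^N \norma{\uhat{u}{n}-\uhat{v}{n}}{L^2}{\Omega_{x,0}}^2 + 2\sum_{n=0}^N \norma{\uhat{v}{n}-\hat{v}^*_{n,h}}{L^2}{\Omega_{x,0}}^2 .
\end{equation*}
The first sum on the right, combined with the tail $\sum_{n=N+1}^\infty \norma{\uhat{u}{n}}{L^2}{\Omega_{x,0}}^2$, is precisely the quantity bounded by \Cref{thm:spectral_convergence}, contributing the $\frac{K}{N^{2m-1}}t$ term (the factor $2$ being absorbed into $K$). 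It then remains to bound the post-processing sum $\sum_{n=0}^N \norma{\uhat{v}{n}-\hat{v}^*_{n,h}}{L^2}{\Omega_{x,0}}^2$ by $Ch^{2s^*}$.

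The core is the per-coefficient SIAC estimate, following \cite{bramble1977} and \cite{CLSS}. For each $n$ I would split $\hat{v}^*_{n,h}-\uhat{v}{n} = K_H^{(r+1,\ell)}\star(\uhat{v}{n,h}-\uhat{v}{n}) + \big(K_H^{(r+1,\ell)}\star\uhat{v}{n}-\uhat{v}{n}\big)$. The second term is the kernel consistency error: since the symmetric kernel built from $r+1$ B-splines of order $\ell$ reproduces polynomials through the appropriate degree (the moment conditions of \cite{mock1978}), it is controlled at order $H^{\ell+r/2+1}$ by a Sobolev seminorm of $\uhat{v}{n}$. For the first term I would use that derivatives of B-splines are scaled divided differences of lower-order B-splines, so that convolution against $K_H$ on $\Omega_{x,0}$ can be rewritten to transfer $\ell$ derivatives onto a smooth compactly supported function; this bounds the $L^2$-norm of the convolved DG error by the negative-order norm $\norma{\uhat{v}{n,h}-\uhat{v}{n}}{\mathcal{H}^{-\ell}}{\Omega_{x,1}}$, which decays as $h^{2k+1}$ by the coefficient-wise superconvergence of \Cref{thm:coeff_neg_estimates} and \cite{CLSS}. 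Taking the smaller of the two exponents gives the per-coefficient bound $\norma{\uhat{v}{n}-\hat{v}^*_{n,h}}{L^2}{\Omega_{x,0}}\leq C_n h^{s^*}$ with $s^*=\min\{2k+1,\ell+r/2+1\}$; squaring and summing over the finitely many $n$ produces $Ch^{2s^*}$.

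The main obstacle will be the negative-norm trading step and, relatedly, keeping the aggregate constant $C=\sum_n C_n^2$ under control. The delicate point is justifying that convolution with the compactly supported kernel, localized to $\Omega_{x,0}\subseteq\Omega_{x,1}$, permits integrating by parts $\ell$ times against the B-spline derivatives without boundary contributions, thereby realizing the full negative-order rate $h^{2k+1}$ rather than the raw $L^2$ rate $h^{k+1}$; this is exactly the superconvergence-extraction mechanism of \cite{CLSS}. One must also verify that the seminorms $|\uhat{v}{n}(\cdot,0)|_{\mathcal{H}^{r+1}}$ are summable in $n$, inherited from the smoothness and decay hypothesis \eqref{eqn:decay_condition} on $\uhat{u}{n}$, so that $\sum_n C_n^2$ is finite and the right-hand side genuinely scales as $h^{2s^*}$.
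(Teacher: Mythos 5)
Your proposal is correct and follows essentially the same route as the paper: the paper likewise splits $\expec\bigl[\|u-v_h^*\|^2\bigr]$ into a gPC-truncation piece ($u-v$, bounded via Theorem \ref{thm:spectral_convergence}), a kernel-consistency piece ($v-v^*$, controlled by the moment conditions), and a filtered-DG piece ($v^*-v_h^*$, reduced coefficient-wise by orthonormality and bounded by the negative-order-norm superconvergence results of \cite{CLSS}). The only difference is presentational: you carry out the orthonormality reduction first and unpack the B-spline/negative-norm trading mechanism that the paper simply cites, which is a fair expansion of the same argument (and since $n$ ranges only over $0,\dots,N$, your worry about summing the constants $C_n^2$ is immaterial at fixed $N$).
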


\begin{proof}
Notice that  

\begin{equation}
    \expec\left[\norma{u-v_h^*}{L^2}{{\Omega_{x,0}}}^2\right]\leq \underbrace{\expec\left[\norma{u-v}{L^2}{{\Omega_{x,0}}}^2\right]}_{\text{gPC approximation}}+\underbrace{\expec\left[\norma{v-v^*}{L^2}{{\Omega_{x,0}}}^2\right]}_{\text{Filter construction}} +\underbrace{\expec\left[\norma{v^*-v_h^*}{L^2}{{\Omega_{x,0}}}^2\right]}_{\text{Filtered DG}}
\end{equation}
From Theorem \ref{thm:spectral_convergence}, we can bound the first term:
\[
\expec\left[\norma{u-v}{L^2}{{\Omega_{x,0}}}^2\right]\leq \frac{K}{N^{2m-1}}t.
\]
The bound on the second term relies solely on the number of moment conditions,
\begin{align*}
\expec[\norma{v - v^*}{L^2}{{\Omega_{x,0}}}^2] \leq  C_K h^{4r+2},
\end{align*}
with $r$ being the number of moments.  Note that this order is actually increased by two orders for the symmetric kernel.

To bound the third term, we utilize the results in \cite{CLSS,SIACLinf} to obtain 
\begin{align*}
\expec[\norma{v - v_h^*}{L^2}{{\Omega_{x,0}}}^2] &= \sum_{n=0}^N\norma{v_n - v_{n,h}^*}{L^2}{{\Omega_{x,0}}}^2 \leq  C_K h^{2s^*},
\end{align*}
    with $s^*=\min\{2k+1,\ell+r/2+1\}.$
\end{proof}

Note that if the number of elements for the DG approximation are chosen proportional to the number of gPc expansion coefficients, $N_{DG} = \alpha N$, it is easy to understand the affect of filtering and how to balance the error estimate for the various terms.   Note that in this case $h = \frac{|\Ox|}{\alpha N}.$ If we consider the usual $L^2-$error estimate for the DG approximation of the gPC coefficients, balancing the error terms leads to :
\[
C |v(\cdot,0)|_{\mathcal{H}^{k+2}(\Ox)}\left(\frac{|\Ox|}{\alpha N}\right)^{2(k+1)} + \frac{K}{N^{2m-1}}t \leq \max\left(C |v(\cdot,0)|_{\mathcal{H}^{k+2}(\Ox)}\left(\frac{|\Ox|}{\alpha }\right)^{2(k+1)},Kt\right)\, \left(\frac{1}{ N}\right)^{2s},
\]
with $s=\min\left(k+1,m-1/2\right)$.
If we instead use the filtered approximation, this becomes
\[
C_F h^{2s^*} + \frac{K}{N^{2m-1}}t \leq \max\left(C_F\left(\frac{| \Ox |}{\alpha }\right)^{2s^*},Kt\right)\, \left(\frac{1}{ N}\right)^{2s^*},
\]
with $s^*=\min\left(2k+1,\ell+r/2+1,m-1/2\right)$. Hence, without filtering, $k = \frac{2m-3}{2}$.  With filtering, assuming the full $2k+1$ convergence can be achieved, this becomes $k = \frac{2m-3}{4}$.   
If we consider the above relations and the points-per-wavelength,  $N_x(k+1)\geq \pi,$ \cite{Hes08W}, with the choice of polynomial degree based on $m,$ then, without the filter, $N_x \geq \frac{2\pi}{2m-1},$ whereas with the filter, $N_x \geq \frac{4\pi}{2m-3}.$





\section{Numerical Experiments}\label{secn:numerical_experiments}
In this section, we present numerical examples to support the theoretical results and demonstrate how the SIAC filter enhances the first few statistical moments, which are the quantities of interest. Since the computed coefficients are not the primary focus, the Polynomial Chaos expansion must be post-processed to extract meaningful information. Depending on the situation, either the probability density function, statistical moments, or quantiles of $u$ are the quantities of interest. Applying  the SIAC filter helps refine these key aspects of the model response.

Using the orthonormality of the polynomial chaos basis one can obtain the mean, $\mu_{v_h}$:
\begin{align}
\mu_{v_h} &= \expec[v_h(x,t;y)]=\int_{\Oy} \, v_h(x,t;y)\rho(y)\, dy\\
          &=\int_{\Oy} \, \left(\sum_{n=0}^N\, \uhat{v}{n,h}(x,t)(x,t)P_n(y)\right)\rho(y)\, dy=\uhat{v}{0,h}(x,t)
\end{align}
and the Variance, $\sigma_u^2$: 
\begin{align}
    \sigma_{v_h}^2=&\expec[(v_h-\mu_{v_h})^2]= \expec[v_h^2]-\expec[v_h]^2 \notag\\
    = &\int_{\Oy} \, \left(\sum_{n=0}^N\, \uhat{v}{n,h}(x,t)(x,t)P_n(y)\right)^2\rho(y)\, dy - \left(\uhat{v}{0,h}(x,t)\right)^2.\notag \\
    =& \sum_{n=1}^N\, \left(\uhat{v}{n,h}(x,t)\right)^2
\end{align}

For our DG-gPC solutions we examine the following five error measures: 
\begin{itemize}
\item[] Mean-square error
\begin{equation}
    \expec[\|u-v_h\|_{L^2(\Omega_x)}^2],
    \label{eq:mean_square_error}
\end{equation}
\item[] $L^p$-error in the mean:
\begin{equation}
    \norma{\mu-\mu_h}{L^p}{\Omega_x},\quad p=\infty,\,2,
    \label{eq:lp_mean_error}
\end{equation}
\item[] $L^p$-error in the variance:
\begin{equation}
    \norma{\sigma-\sigma_h}{L^p}{\Omega_x},\quad p=\infty,\,2,
    \label{eq:lp_std_error}
\end{equation}
\end{itemize}

We illustrate the effectiveness of the filter and the affects of the different parameters in the filter by considering the problem \eqref{eqn:model} with a periodic boundary condition in physical space. The periodicity ensures that no errors are induced from specifying boundary conditions. The model problem that we consider is
\begin{equation}
    \begin{split}
        &\frac{\partial u}{\partial t} = y \frac{\partial u}{\partial x}, \qquad  0 < x < 2\pi,\, t \geq 0 \\
        &u(x,0,y) = \cos(x), \quad  0 < x < 2\pi.
    \end{split}
\end{equation}
The exact solution is given by
\begin{equation}
    u_{\mathrm{exact}}(x,t,y)=\cos(x+yt). 
\end{equation}

The exact formulas for the quantities of interest, i.e. mean $\mu_u$ and variance $\sigma_u,$ are given by
\begin{equation}
        \mu_{\mathrm{exact}}=\frac{\cos(x)\sin(t)}{t}
        \label{eq:exact_mean_periodic}
\end{equation}
\begin{equation}
        \sigma_{\mathrm{exact}}=\frac{t}{2}+\frac{\cos(2x)\sin(2t)}{4t}-\frac{\cos^2(x)\sin^2(t)}{t^2},
        \label{eq:exact_variance_periodic}
    \end{equation}
    respectively.
    
 \Cref{tbl:mean_square_error,tbl:linf_mean_error,tbl:ltwo_mean_error,tbl:linf_var_error,tbl:ltwo_variance_error}, and  results are demonstrated for the system \eqref{eqn:gpc_matrix_system} at time $T=1$ for a different number of polynomial chaos basis functions, $N$.  The SIAC filter is then applied to the DG approximation of the chaos-coefficients and then both the filtered mean $\mu_h^*$ and variance $\sigma_h^*$ are calculated for the filtered solution, $v^*_h$.  Results are then compared with $u_{\mathrm{exact}},\,\mu_{\mathrm{exact}}$ and $\sigma_{\mathrm{exact}}$. As a time integrator we used a TVD-RK method \eqref{eq:tvd-rk}. To have a clean presentation of the numerical experiments, we make the following change of notation for the mesh size: $h=\Delta x$. To ensure the spatial order dominates, we take $\Delta t= \mathrm{CFL}\cdot\Delta x/\lambda_{\max}$ for $\mathbb{P}^1$, and $\Delta t=\mathrm{CFL}\Delta x^{5/3}/\lambda_{\max}$ for $\mathbb{P}^2$. Here $\lambda_{\max}=\max\{|\lambda|:\lambda\text{ is an eigenvalue of}\,\mathbf{A}\}$, where $\mathbf{A}$ is the system matrix. From the tables we can see that in all measures, for $N$ large enough, we observe the predicted $(k+1)$-th  order of convergence for the DG-gPc solution before post-processing for $u$, $\mu$ and $\sigma$ ($(k+1)^2$ for the mean square error in Table \ref{tbl:mean_square_error}). We can clearly see that we improve the error to at least $O(h^{2k+1})$ ($O(h^{4k+2})$ for the mean-square error in Table \ref{tbl:mean_square_error}).

\begin{table}[ht]
\centering
\begin{tabular}{lcccccccc}
\hline
\multicolumn{9}{l}{Before post-processing}                                                                                                                                                                   \\ \hline
                     & \multicolumn{1}{l}{} & \multicolumn{1}{l}{} & \multicolumn{1}{l}{} & \multicolumn{1}{l}{} & \multicolumn{1}{l}{} & \multicolumn{1}{l}{} & \multicolumn{1}{l}{} & \multicolumn{1}{l}{} \\
                     & $N=5$                &                      & $N=6$                &                      & $N=7$                &                      & $N=8$                &                      \\
Mesh                 & Error $u$            & Order                & Error $u$            & Order                & Error $u$            & Order                & Error $u$            & Order                \\ \hline
                     & \multicolumn{1}{l}{} & \multicolumn{1}{l}{} & \multicolumn{1}{l}{} & \multicolumn{1}{l}{} & \multicolumn{1}{l}{} & \multicolumn{1}{l}{} & \multicolumn{1}{l}{} & \multicolumn{1}{l}{} \\
$\mathbb{P}^1$       &                      &                      &                      &                      &                      &                      &                      &                      \\
10                   & 1.82E-03             &                      & 1.53E-03             &                      & 1.44E-03             &                      & 1.46E-03             &                      \\
20                   & 1.27E-04             & 3.84                 & 1.00E-04             & 3.93                 & 9.46E-05             & 3.93                 & 9.59E-05             & 3.93                 \\
40                   & 8.15E-06             & 3.97                 & 6.33E-06             & 3.99                 & 5.99E-06             & 3.98                 & 6.09E-06             & 3.98                 \\
80                   & 5.13E-07             & 3.99                 & 3.96E-07             & 4.00                 & 3.76E-07             & 4.00                 & 3.83E-07             & 3.99                 \\
160                  & 3.42E-08             & 3.91                 & 2.47E-08             & 4.00                 & 2.35E-08             & 4.00                 & 2.40E-08             & 4.00                 \\
                     & \multicolumn{1}{l}{} & \multicolumn{1}{l}{} & \multicolumn{1}{l}{} & \multicolumn{1}{l}{} & \multicolumn{1}{l}{} & \multicolumn{1}{l}{} & \multicolumn{1}{l}{} & \multicolumn{1}{l}{} \\
$\mathbb{P}^2$       &                      &                      &                      &                      &                      &                      &                      &                      \\
10                   & 6.10E-06             &                      & 4.62E-06             &                      & 4.13E-06             &                      & 4.10E-06             &                      \\
20                   & 8.31E-08             & 6.20                 & 6.60E-08             & 6.13                 & 6.28E-08             & 6.04                 & 6.19E-08             & 6.05                 \\
40                   & 3.47E-09             & 4.58                 & 1.03E-09             & 6.01                 & 9.67E-10             & 6.02                 & 9.81E-10             & 5.98                 \\
80                   & 2.19E-09             & 0.66                 & 2.77E-11             & 6.40                 & 1.51E-11             & 6.00                 & 1.54E-11             & 6.00                 \\
160                  & 2.17E-09             & 0.01                 & 1.21E-11             & 1.19                 & 2.80E-13             & 5.75                 & 2.40E-13             & 6.00                 \\ \hline
\multicolumn{9}{l}{After post-processing}                                                                                                                                                                    \\ \hline
                     & \multicolumn{1}{l}{} & \multicolumn{1}{l}{} & \multicolumn{1}{l}{} & \multicolumn{1}{l}{} & \multicolumn{1}{l}{} & \multicolumn{1}{l}{} & \multicolumn{1}{l}{} & \multicolumn{1}{l}{} \\
\multicolumn{1}{c}{} & $N=5$                &                      & $N=6$                &                      & $N=7$                &                      & $N=8$                &                      \\
Mesh                 & Error $u^*$          & Order                & Error $u^*$          & Order                & Error $u^*$          & Order                & Error $u^*$          & Order                \\ \hline
                     & \multicolumn{1}{l}{} & \multicolumn{1}{l}{} & \multicolumn{1}{l}{} & \multicolumn{1}{l}{} & \multicolumn{1}{l}{} & \multicolumn{1}{l}{} & \multicolumn{1}{l}{} & \multicolumn{1}{l}{} \\
$\mathbb{P}^1$       &                      &                      &                      &                      &                      &                      &                      &                      \\
10                   & 3.65E-05             &                      & 3.58E-05             &                      & 3.64E-05             &                      & 3.65E-05             &                      \\
20                   & 3.72E-07             & 6.62                 & 3.47E-07             & 6.69                 & 3.58E-07             & 6.67                 & 3.62E-07             & 6.65                 \\
40                   & 7.88E-09             & 5.56                 & 4.25E-09             & 6.35                 & 4.22E-09             & 6.41                 & 4.32E-09             & 6.39                 \\
80                   & 2.41E-09             & 1.71                 & 9.49E-11             & 5.48                 & 5.57E-11             & 6.24                 & 5.83E-11             & 6.21                 \\
160                  & 2.19E-09             & 0.14                 & 1.63E-11             & 2.54                 & 7.52E-13             & 6.21                 & 8.33E-13             & 6.13                 \\
                     & \multicolumn{1}{l}{} & \multicolumn{1}{l}{} & \multicolumn{1}{l}{} & \multicolumn{1}{l}{} & \multicolumn{1}{l}{} & \multicolumn{1}{l}{} & \multicolumn{1}{l}{} & \multicolumn{1}{l}{} \\
$\mathbb{P}^2$       &                      &                      &                      &                      &                      &                      &                      &                      \\
10                   & 1.22E-07             &                      & 1.24E-07             &                      & 1.23E-07             &                      & 1.23E-07             &                      \\
20                   & 2.17E-09             & 5.82                 & 6.78E-11             & 10.84                & 3.62E-11             & 11.73                & 3.58E-11             & 11.74                \\
40                   & 2.17E-09             & 0.00                 & 1.22E-11             & 2.47                 & 5.69E-14             & 9.32                 & 9.23E-15             & 11.92                \\
80                   & 2.17E-09             & 0.00                 & 1.19E-11             & 0.04                 & 4.46E-14             & 0.35                 & 1.27E-16             & 6.18                 \\
160                  & 2.17E-09             & 0.00                 & 1.19E-11             & 0.00                 & 4.46E-14             & 0.00                 & 1.47E-16             & -0.21                \\ \hline
\end{tabular}
\caption{Mean square errors for the numerical solution (top) and the post-processed solution (bottom) for the periodic problem using the discontinuous Galerkin method using a $\mathbb{P}^k$ polynomial approximation in physical space $x$.}
\label{tbl:mean_square_error}
\end{table}

\begin{table}[ht]
\centering
\begin{tabular}{lcccccccc}
\hline
\multicolumn{9}{l}{Before post-processing}                                                                                                                                                                   \\ \hline
                     & \multicolumn{1}{l}{} & \multicolumn{1}{l}{} & \multicolumn{1}{l}{} & \multicolumn{1}{l}{} & \multicolumn{1}{l}{} & \multicolumn{1}{l}{} & \multicolumn{1}{l}{} & \multicolumn{1}{l}{} \\
                     & $N=5$              &                      & $N=6$                &                      & $N=7$                &                      & $N=8$                &                      \\
Mesh                 & Error $\mu$          & Order                & Error $\mu$          & Order                & Error $\mu$          & Order                & Error $\mu$          & Order                \\ \hline
                     & \multicolumn{1}{l}{} & \multicolumn{1}{l}{} & \multicolumn{1}{l}{} & \multicolumn{1}{l}{} & \multicolumn{1}{l}{} & \multicolumn{1}{l}{} & \multicolumn{1}{l}{} & \multicolumn{1}{l}{} \\
$\mathbb{P}^1$       &                      &                      &                      &                      &                      &                      &                      &                      \\
10                   & 2.56E-02             &                      & 2.55E-02             &                      & 2.56E-02             &                      & 2.55E-02             &                      \\
20                   & 6.89E-03             & 1.89                 & 6.86E-03             & 1.89                 & 6.89E-03             & 1.89                 & 6.87E-03             & 1.89                 \\
40                   & 1.79E-03             & 1.95                 & 1.77E-03             & 1.95                 & 1.78E-03             & 1.95                 & 1.77E-03             & 1.95                 \\
80                   & 4.55E-04             & 1.97                 & 4.50E-04             & 1.98                 & 4.54E-04             & 1.97                 & 4.51E-04             & 1.98                 \\
160                  & 1.15E-04             & 1.99                 & 1.13E-04             & 1.99                 & 1.15E-04             & 1.99                 & 1.14E-04             & 1.99                 \\
                     & \multicolumn{1}{l}{} & \multicolumn{1}{l}{} & \multicolumn{1}{l}{} & \multicolumn{1}{l}{} & \multicolumn{1}{l}{} & \multicolumn{1}{l}{} & \multicolumn{1}{l}{} & \multicolumn{1}{l}{} \\
$\mathbb{P}^2$       &                      &                      &                      &                      &                      &                      &                      &                      \\
10                   & 1.61E-03             &                      & 1.74E-03             &                      & 1.52E-03             &                      & 1.72E-03             &                      \\
20                   & 2.20E-04             & 2.87                 & 2.13E-04             & 3.03                 & 2.24E-04             & 2.76                 & 2.11E-04             & 3.03                 \\
40                   & 2.70E-05             & 3.03                 & 2.66E-05             & 3.00                 & 2.67E-05             & 3.07                 & 2.67E-05             & 2.99                 \\
80                   & 3.37E-06             & 3.00                 & 3.33E-06             & 3.00                 & 3.36E-06             & 2.99                 & 3.34E-06             & 3.00                 \\
160                  & 4.21E-07             & 3.00                 & 4.16E-07             & 3.00                 & 4.20E-07             & 3.00                 & 4.17E-07             & 3.00                 \\ \hline
\multicolumn{9}{l}{After post-processing}                                                                                                                                                                    \\ \hline
                     & \multicolumn{1}{l}{} & \multicolumn{1}{l}{} & \multicolumn{1}{l}{} & \multicolumn{1}{l}{} & \multicolumn{1}{l}{} & \multicolumn{1}{l}{} & \multicolumn{1}{l}{} & \multicolumn{1}{l}{} \\
\multicolumn{1}{c}{} & $N=5$                &                      & $N=6$                &                      & $N=7$                &                      & $N=8$                &                      \\
Mesh                 & Error $\mu^*$        & Order                & Error $\mu^*$        & Order                & Error $\mu^*$        & Order                & Error $\mu^*$        & Order                \\ \hline
                     & \multicolumn{1}{l}{} & \multicolumn{1}{l}{} & \multicolumn{1}{l}{} & \multicolumn{1}{l}{} & \multicolumn{1}{l}{} & \multicolumn{1}{l}{} & \multicolumn{1}{l}{} & \multicolumn{1}{l}{} \\
$\mathbb{P}^1$       &                      &                      &                      &                      &                      &                      &                      &                      \\
10                   & 2.80E-03             &                      & 2.78E-03             &                      & 2.79E-03             &                      & 2.78E-03             &                      \\
20                   & 2.67E-04             & 3.39                 & 2.61E-04             & 3.41                 & 2.65E-04             & 3.40                 & 2.62E-04             & 3.41                 \\
40                   & 2.74E-05             & 3.28                 & 2.64E-05             & 3.31                 & 2.72E-05             & 3.29                 & 2.66E-05             & 3.30                 \\
80                   & 3.04E-06             & 3.17                 & 2.89E-06             & 3.19                 & 3.01E-06             & 3.18                 & 2.92E-06             & 3.19                 \\
160                  & 3.55E-07             & 3.10                 & 3.35E-07             & 3.11                 & 3.51E-07             & 3.10                 & 3.40E-07             & 3.11                 \\
                     & \multicolumn{1}{l}{} & \multicolumn{1}{l}{} & \multicolumn{1}{l}{} & \multicolumn{1}{l}{} & \multicolumn{1}{l}{} & \multicolumn{1}{l}{} & \multicolumn{1}{l}{} & \multicolumn{1}{l}{} \\
$\mathbb{P}^2$       &                      &                      &                      &                      &                      &                      &                      &                      \\
10                   & 1.67E-04             &                      & 1.67E-04             &                      & 1.67E-04             &                      & 1.67E-04             &                      \\
20                   & 2.84E-06             & 5.87                 & 2.84E-06             & 5.88                 & 2.84E-06             & 5.88                 & 2.84E-06             & 5.88                 \\
40                   & 4.80E-08             & 5.89                 & 4.77E-08             & 5.90                 & 4.79E-08             & 5.89                 & 4.77E-08             & 5.89                 \\
80                   & 8.44E-10             & 5.83                 & 8.33E-10             & 5.84                 & 8.40E-10             & 5.83                 & 8.34E-10             & 5.84                 \\
160                  & 1.69E-11             & 5.64                 & 1.58E-11             & 5.72                 & 1.60E-11             & 5.71                 & 1.58E-11             & 5.72                 \\ \hline
\end{tabular}
\caption{$L^{\infty}$ errors for the approximation to the mean (top) and the post-processed mean (bottom) for the periodic problem using the discontinuous Galerkin method using a $\mathbb{P}^k$ polynomial approximation in physical space $x$.}
\label{tbl:linf_mean_error}
\end{table}


\begin{table}[ht]
\centering
\begin{tabular}{lcccccccc}
\hline
\multicolumn{9}{l}{Before post-processing}                                                                                                                                                                   \\ \hline
                     & \multicolumn{1}{l}{} & \multicolumn{1}{l}{} & \multicolumn{1}{l}{} & \multicolumn{1}{l}{} & \multicolumn{1}{l}{} & \multicolumn{1}{l}{} & \multicolumn{1}{l}{} & \multicolumn{1}{l}{} \\
                     & $N=5$                &                      & $N=6$                &                      & $N=7$                &                      & $N=8$                &                      \\
Mesh                 & Error $\mu$          & Order                & Error $\mu$          & Order                & Error $\mu$          & Order                & Error $\mu$          & Order                \\ \hline
                     & \multicolumn{1}{l}{} & \multicolumn{1}{l}{} & \multicolumn{1}{l}{} & \multicolumn{1}{l}{} & \multicolumn{1}{l}{} & \multicolumn{1}{l}{} & \multicolumn{1}{l}{} & \multicolumn{1}{l}{} \\
$\mathbb{P}^1$       &                      &                      &                      &                      &                      &                      &                      &                      \\
10                   & 9.94E-03             &                      & 9.80E-03             &                      & 9.94E-03             &                      & 9.82E-03             &                      \\
20                   & 2.56E-03             & 1.96                 & 2.54E-03             & 1.95                 & 2.56E-03             & 1.96                 & 2.55E-03             & 1.95                 \\
40                   & 6.57E-04             & 1.96                 & 6.49E-04             & 1.97                 & 6.55E-04             & 1.97                 & 6.51E-04             & 1.97                 \\
80                   & 1.66E-04             & 1.98                 & 1.64E-04             & 1.99                 & 1.66E-04             & 1.98                 & 1.65E-04             & 1.98                 \\
160                  & 4.19E-05             & 1.99                 & 4.12E-05             & 1.99                 & 4.18E-05             & 1.99                 & 4.14E-05             & 1.99                 \\
                     & \multicolumn{1}{l}{} & \multicolumn{1}{l}{} & \multicolumn{1}{l}{} & \multicolumn{1}{l}{} & \multicolumn{1}{l}{} & \multicolumn{1}{l}{} & \multicolumn{1}{l}{} & \multicolumn{1}{l}{} \\
$\mathbb{P}^2$       &                      &                      &                      &                      &                      &                      &                      &                      \\
10                   & 5.43E-04             &                      & 5.26E-04             &                      & 5.29E-04             &                      & 5.30E-04             &                      \\
20                   & 6.68E-05             & 3.02                 & 6.75E-05             & 2.96                 & 6.75E-05             & 2.97                 & 6.73E-05             & 2.98                 \\
40                   & 8.57E-06             & 2.96                 & 8.48E-06             & 2.99                 & 8.58E-06             & 2.98                 & 8.51E-06             & 2.98                 \\
80                   & 1.08E-06             & 2.99                 & 1.07E-06             & 2.99                 & 1.08E-06             & 2.99                 & 1.07E-06             & 2.99                 \\
160                  & 1.36E-07             & 2.99                 & 1.34E-07             & 3.00                 & 1.35E-07             & 2.99                 & 1.34E-07             & 3.00                 \\ \hline
\multicolumn{9}{l}{After post-processing}                                                                                                                                                                    \\ \hline
                     & \multicolumn{1}{l}{} & \multicolumn{1}{l}{} & \multicolumn{1}{l}{} & \multicolumn{1}{l}{} & \multicolumn{1}{l}{} & \multicolumn{1}{l}{} & \multicolumn{1}{l}{} & \multicolumn{1}{l}{} \\
\multicolumn{1}{c}{} & $N=5$                &                      & $N=6$                &                      & $N=7$                &                      & $N=8$                &                      \\
Mesh                 & Error $\mu^*$        & Order                & Error $\mu^*$        & Order                & Error $\mu^*$        & Order                & Error $\mu^*$        & Order                \\ \hline
                     & \multicolumn{1}{l}{} & \multicolumn{1}{l}{} & \multicolumn{1}{l}{} & \multicolumn{1}{l}{} & \multicolumn{1}{l}{} & \multicolumn{1}{l}{} & \multicolumn{1}{l}{} & \multicolumn{1}{l}{} \\
$\mathbb{P}^1$       &                      &                      &                      &                      &                      &                      &                      &                      \\
10                   & 1.95E-03             &                      & 1.93E-03             &                      & 1.94E-03             &                      & 1.93E-03             &                      \\
20                   & 1.84E-04             & 3.40                 & 1.79E-04             & 3.43                 & 1.82E-04             & 3.41                 & 1.80E-04             & 3.42                 \\
40                   & 1.90E-05             & 3.27                 & 1.83E-05             & 3.29                 & 1.89E-05             & 3.27                 & 1.85E-05             & 3.29                 \\
80                   & 2.13E-06             & 3.16                 & 2.02E-06             & 3.18                 & 2.10E-06             & 3.16                 & 2.04E-06             & 3.18                 \\
160                  & 2.50E-07             & 3.09                 & 2.36E-07             & 3.10                 & 2.47E-07             & 3.09                 & 2.39E-07             & 3.10                 \\
                     & \multicolumn{1}{l}{} & \multicolumn{1}{l}{} & \multicolumn{1}{l}{} & \multicolumn{1}{l}{} & \multicolumn{1}{l}{} & \multicolumn{1}{l}{} & \multicolumn{1}{l}{} & \multicolumn{1}{l}{} \\
$\mathbb{P}^2$       &                      &                      &                      &                      &                      &                      &                      &                      \\
10                   & 1.17E-04             &                      & 1.17E-04             &                      & 1.17E-04             &                      & 1.17E-04             &                      \\
20                   & 2.00E-06             & 5.87                 & 2.00E-06             & 5.88                 & 2.00E-06             & 5.87                 & 1.91E-06             & 5.94                 \\
40                   & 3.38E-08             & 5.89                 & 3.36E-08             & 5.89                 & 3.37E-08             & 5.89                 & 3.36E-08             & 5.83                 \\
80                   & 5.95E-10             & 5.83                 & 5.87E-10             & 5.84                 & 5.92E-10             & 5.83                 & 5.88E-10             & 5.84                 \\
160                  & 1.19E-11             & 5.64                 & 1.11E-11             & 5.72                 & 1.13E-11             & 5.71                 & 1.12E-11             & 5.72                 \\ \hline
\end{tabular}
\caption{$L^{2}$ errors for the approximation to the mean (top) and the post-processed mean (bottom) for the periodic problem using the discontinuous Galerkin method using a $\mathbb{P}^k$ polynomial approximation in physical space $x$.}
\label{tbl:ltwo_mean_error}
\end{table}


\begin{table}[ht]
\centering
\begin{tabular}{lcccccccc}
\hline
\multicolumn{9}{l}{Before post-processing}                                                                                                                                                                   \\ \hline
                     & \multicolumn{1}{l}{} & \multicolumn{1}{l}{} & \multicolumn{1}{l}{} & \multicolumn{1}{l}{} & \multicolumn{1}{l}{} & \multicolumn{1}{l}{} & \multicolumn{1}{l}{} & \multicolumn{1}{l}{} \\
                     & $N=5$                &                      & $N=6$                &                      & $N=7$                &                      & $N=8$                &                      \\
Mesh                 & Error $\sigma$       & Order                & Error $\sigma$       & Order                & Error $\sigma$       & Order                & Error $\sigma$       & Order                \\ \hline
                     & \multicolumn{1}{l}{} & \multicolumn{1}{l}{} & \multicolumn{1}{l}{} & \multicolumn{1}{l}{} & \multicolumn{1}{l}{} & \multicolumn{1}{l}{} & \multicolumn{1}{l}{} & \multicolumn{1}{l}{} \\
$\mathbb{P}^1$       &                      &                      &                      &                      &                      &                      &                      &                      \\
10                   & 1.71E-02             &                      & 1.68E-02             &                      & 1.69E-02             &                      & 1.69E-02             &                      \\
20                   & 5.18E-03             & 1.72                 & 5.01E-03             & 1.75                 & 5.15E-03             & 1.72                 & 5.05E-03             & 1.74                 \\
40                   & 1.34E-03             & 1.95                 & 1.30E-03             & 1.95                 & 1.33E-03             & 1.95                 & 1.31E-03             & 1.95                 \\
80                   & 3.40E-04             & 1.98                 & 3.28E-04             & 1.98                 & 3.38E-04             & 1.98                 & 3.31E-04             & 1.98                 \\
160                  & 8.55E-05             & 1.99                 & 8.26E-05             & 1.99                 & 8.50E-05             & 1.99                 & 8.33E-05             & 1.99                 \\
                     & \multicolumn{1}{l}{} & \multicolumn{1}{l}{} & \multicolumn{1}{l}{} & \multicolumn{1}{l}{} & \multicolumn{1}{l}{} & \multicolumn{1}{l}{} & \multicolumn{1}{l}{} & \multicolumn{1}{l}{} \\
$\mathbb{P}^2$       &                      &                      &                      &                      &                      &                      &                      &                      \\
10                   & 1.65E-03             &                      & 1.60E-03             &                      & 1.60E-03             &                      & 1.61E-03             &                      \\
20                   & 1.98E-04             & 3.06                 & 1.92E-04             & 3.06                 & 2.02E-04             & 2.99                 & 1.90E-04             & 3.09                 \\
40                   & 2.46E-05             & 3.01                 & 2.33E-05             & 3.04                 & 2.44E-05             & 3.05                 & 2.36E-05             & 3.01                 \\
80                   & 3.03E-06             & 3.02                 & 2.87E-06             & 3.02                 & 3.00E-06             & 3.02                 & 2.90E-06             & 3.02                 \\
160                  & 3.78E-07             & 3.01                 & 3.56E-07             & 3.01                 & 3.73E-07             & 3.01                 & 3.60E-07             & 3.01                 \\ \hline
\multicolumn{9}{l}{After post-processing}                                                                                                                                                                    \\ \hline
                     & \multicolumn{1}{l}{} & \multicolumn{1}{l}{} & \multicolumn{1}{l}{} & \multicolumn{1}{l}{} & \multicolumn{1}{l}{} & \multicolumn{1}{l}{} & \multicolumn{1}{l}{} & \multicolumn{1}{l}{} \\
\multicolumn{1}{c}{} & $N=5$                &                      & $N=6$                &                      & $N=7$                &                      & $N=8$                &                      \\
Mesh                 & Error $\sigma^*$     & Order                & Error $\sigma^*$     & Order                & Error $\sigma^*$     & Order                & Error $\sigma^*$     & Order                \\ \hline
                     & \multicolumn{1}{l}{} & \multicolumn{1}{l}{} & \multicolumn{1}{l}{} & \multicolumn{1}{l}{} & \multicolumn{1}{l}{} & \multicolumn{1}{l}{} & \multicolumn{1}{l}{} & \multicolumn{1}{l}{} \\
$\mathbb{P}^1$       &                      &                      &                      &                      &                      &                      &                      &                      \\
10                   & 2.04E-03             &                      & 2.05E-03             &                      & 2.05E-03             &                      & 2.05E-03             &                      \\
20                   & 2.12E-04             & 3.27                 & 2.12E-04             & 3.27                 & 2.12E-04             & 3.27                 & 2.12E-04             & 3.27                 \\
40                   & 2.41E-05             & 3.13                 & 2.42E-05             & 3.13                 & 2.42E-05             & 3.13                 & 2.42E-05             & 3.13                 \\
80                   & 2.85E-06             & 3.08                 & 2.87E-06             & 3.08                 & 2.86E-06             & 3.08                 & 2.86E-06             & 3.08                 \\
160                  & 3.45E-07             & 3.05                 & 3.48E-07             & 3.04                 & 3.47E-07             & 3.04                 & 3.47E-07             & 3.04                 \\
                     & \multicolumn{1}{l}{} & \multicolumn{1}{l}{} & \multicolumn{1}{l}{} & \multicolumn{1}{l}{} & \multicolumn{1}{l}{} & \multicolumn{1}{l}{} & \multicolumn{1}{l}{} & \multicolumn{1}{l}{} \\
$\mathbb{P}^2$       &                      &                      &                      &                      &                      &                      &                      &                      \\
10                   & 1.09E-04             &                      & 1.09E-04             &                      & 1.09E-04             &                      & 1.09E-04             &                      \\
20                   & 1.91E-06             & 5.84                 & 1.91E-06             & 5.84                 & 1.91E-06             & 5.84                 & 1.91E-06             & 5.84                 \\
40                   & 3.20E-08             & 5.90                 & 3.34E-08             & 5.84                 & 3.33E-08             & 5.84                 & 3.33E-08             & 5.84                 \\
80                   & 1.48E-09             & 4.44                 & 6.31E-10             & 5.73                 & 6.21E-10             & 5.75                 & 6.20E-10             & 5.75                 \\
160                  & 1.47E-09             & 0.01                 & 2.11E-11             & 4.90                 & 1.28E-11             & 5.60                 & 1.28E-11             & 5.60                 \\ \hline
\end{tabular}
	\caption{$L^{\infty}$ errors for the approximation to the variance (top) and the post-processed variance (bottom) for the periodic problem using the discontinuous Galerkin method using a $\mathbb{P}^k$ polynomial approximation in physical space $x$.}
    \label{tbl:linf_var_error}
\end{table}

\begin{table}[ht]
\centering
\begin{tabular}{lcccccccc}
\hline
\multicolumn{9}{l}{Before post-processing}                                                                                                                                                                   \\ \hline
                     & \multicolumn{1}{l}{} & \multicolumn{1}{l}{} & \multicolumn{1}{l}{} & \multicolumn{1}{l}{} & \multicolumn{1}{l}{} & \multicolumn{1}{l}{} & \multicolumn{1}{l}{} & \multicolumn{1}{l}{} \\
                     & $N=5$                &                      & $N=6$                &                      & $N=7$                &                      & $N=8$                &                      \\
Mesh                 & Error $\sigma$       & Order                & Error $\sigma$       & Order                & Error $\sigma$       & Order                & Error $\sigma$       & Order                \\ \hline
                     & \multicolumn{1}{l}{} & \multicolumn{1}{l}{} & \multicolumn{1}{l}{} & \multicolumn{1}{l}{} & \multicolumn{1}{l}{} & \multicolumn{1}{l}{} & \multicolumn{1}{l}{} & \multicolumn{1}{l}{} \\
$\mathbb{P}^1$       &                      &                      &                      &                      &                      &                      &                      &                      \\
10                   & 6.19E-03             &                      & 6.01E-03             &                      & 6.14E-03             &                      & 6.05E-03             &                      \\
20                   & 1.63E-03             & 1.93                 & 1.57E-03             & 1.94                 & 1.61E-03             & 1.93                 & 1.58E-03             & 1.93                 \\
40                   & 4.15E-04             & 1.97                 & 3.99E-04             & 1.97                 & 4.12E-04             & 1.97                 & 4.03E-04             & 1.97                 \\
80                   & 1.05E-04             & 1.99                 & 1.01E-04             & 1.99                 & 1.04E-04             & 1.99                 & 1.01E-04             & 1.99                 \\
160                  & 2.62E-05             & 1.99                 & 2.52E-05             & 1.99                 & 2.61E-05             & 1.99                 & 2.55E-05             & 1.99                 \\
                     & \multicolumn{1}{l}{} & \multicolumn{1}{l}{} & \multicolumn{1}{l}{} & \multicolumn{1}{l}{} & \multicolumn{1}{l}{} & \multicolumn{1}{l}{} & \multicolumn{1}{l}{} & \multicolumn{1}{l}{} \\
$\mathbb{P}^2$       &                      &                      &                      &                      &                      &                      &                      &                      \\
10                   & 5.00E-04             &                      & 4.87E-04             &                      & 4.84E-04             &                      & 4.90E-04             &                      \\
20                   & 5.93E-05             & 3.08                 & 5.66E-05             & 3.11                 & 6.04E-05             & 3.00                 & 5.61E-05             & 3.13                 \\
40                   & 7.24E-06             & 3.03                 & 6.83E-06             & 3.05                 & 7.15E-06             & 3.08                 & 6.92E-06             & 3.02                 \\
80                   & 8.90E-07             & 3.03                 & 8.40E-07             & 3.02                 & 8.81E-07             & 3.02                 & 8.51E-07             & 3.02                 \\
160                  & 1.10E-07             & 3.01                 & 1.04E-07             & 3.01                 & 1.09E-07             & 3.01                 & 1.06E-07             & 3.01                 \\ \hline
\multicolumn{9}{l}{After post-processing}                                                                                                                                                                    \\ \hline
                     & \multicolumn{1}{l}{} & \multicolumn{1}{l}{} & \multicolumn{1}{l}{} & \multicolumn{1}{l}{} & \multicolumn{1}{l}{} & \multicolumn{1}{l}{} & \multicolumn{1}{l}{} & \multicolumn{1}{l}{} \\
\multicolumn{1}{c}{} & $N=5$                &                      & $N=6$                &                      & $N=7$                &                      & $N=8$                &                      \\
Mesh                 & Error $\sigma^*$     & Order                & Error $\sigma^*$     & Order                & Error $\sigma^*$     & Order                & Error $\sigma^*$     & Order                \\ \hline
                     & \multicolumn{1}{l}{} & \multicolumn{1}{l}{} & \multicolumn{1}{l}{} & \multicolumn{1}{l}{} & \multicolumn{1}{l}{} & \multicolumn{1}{l}{} & \multicolumn{1}{l}{} & \multicolumn{1}{l}{} \\
$\mathbb{P}^1$       &                      &                      &                      &                      &                      &                      &                      &                      \\
10                   & 1.22E-03             &                      & 1.22E-03             &                      & 1.22E-03             &                      & 1.22E-03             &                      \\
20                   & 1.27E-04             & 3.26                 & 1.27E-04             & 3.26                 & 1.27E-04             & 3.26                 & 1.27E-04             & 3.26                 \\
40                   & 1.43E-05             & 3.15                 & 1.43E-05             & 3.15                 & 1.43E-05             & 3.15                 & 1.43E-05             & 3.15                 \\
80                   & 1.69E-06             & 3.08                 & 1.69E-06             & 3.08                 & 1.69E-06             & 3.08                 & 1.69E-06             & 3.08                 \\
160                  & 2.05E-07             & 3.04                 & 2.05E-07             & 3.04                 & 2.06E-07             & 3.04                 & 2.05E-07             & 3.04                 \\
                     & \multicolumn{1}{l}{} & \multicolumn{1}{l}{} & \multicolumn{1}{l}{} & \multicolumn{1}{l}{} & \multicolumn{1}{l}{} & \multicolumn{1}{l}{} & \multicolumn{1}{l}{} & \multicolumn{1}{l}{} \\
$\mathbb{P}^2$       &                      &                      &                      &                      &                      &                      &                      &                      \\
10                   & 6.85E-05             &                      & 6.84E-05             &                      & 6.84E-05             &                      & 6.84E-05             &                      \\
20                   & 1.19E-06             & 5.85                 & 1.19E-06             & 5.85                 & 1.19E-06             & 5.85                 & 1.19E-06             & 5.85                 \\
40                   & 2.01E-08             & 5.88                 & 2.06E-08             & 5.85                 & 2.06E-08             & 5.85                 & 2.06E-08             & 5.85                 \\
80                   & 8.79E-10             & 4.52                 & 3.84E-10             & 5.75                 & 3.81E-10             & 5.76                 & 3.80E-10             & 5.76                 \\
160                  & 1.03E-09             & -0.23                & 1.22E-11             & 4.98                 & 7.80E-12             & 5.61                 & 7.77E-12             & 5.61                 \\ \hline
\end{tabular}
\caption{$L^{2}$ errors for the approximation to the variance (top) and the post-processed variance (bottom) for the periodic problem using the discontinuous Galerkin method using a $\mathbb{P}^k$ polynomial approximation in physical space $x$.}
\label{tbl:ltwo_variance_error}
\end{table}

In Figure \ref{fig:error_mean_periodic} we plot the errors of the approximated mean before and after post-processing for $N=5$, $\mathbb{P}^2$ and $20$ elements. We can see that the errors before post-processing are highly oscillatory, and that the post-processing smooths out the error surface and greatly reduces its magnitude. In Figure \ref{fig:error_variation_periodic}, we plot the errors of the approximations for $\sigma$ obtained with the same data as before, again the post-processing technique gets rid of the oscillations and greatly reduces the magnitude of the error.

\begin{figure}[!htbp]
    \centering
    \begin{subfigure}{0.49\textwidth}
        \centering
        \includegraphics[width=\textwidth]{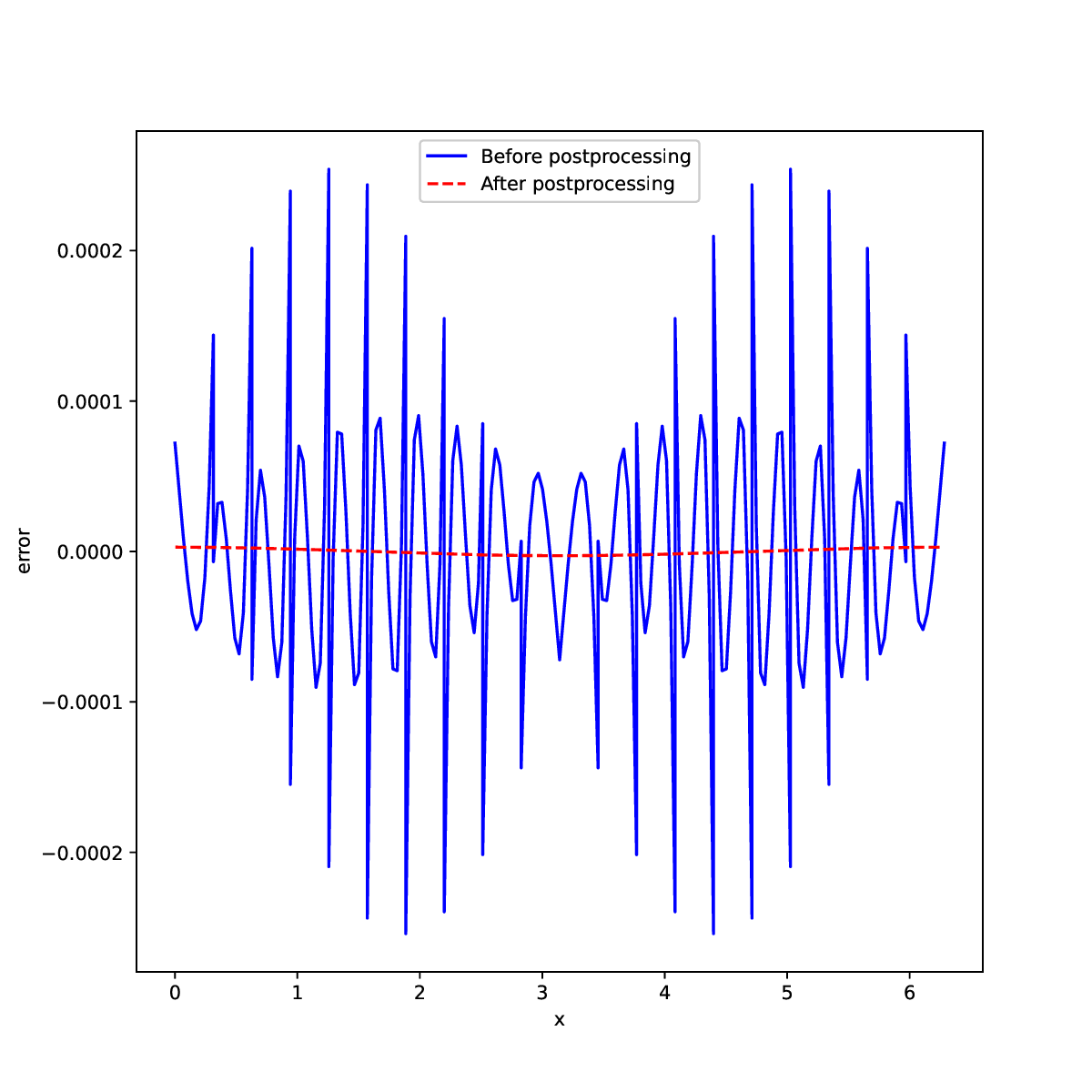}
        \caption{Mean $\mu_u$}
        \label{fig:error_mean_periodic}
    \end{subfigure}
    \hfill
    \begin{subfigure}{0.49\textwidth}
        \centering
        \includegraphics[width=\textwidth]{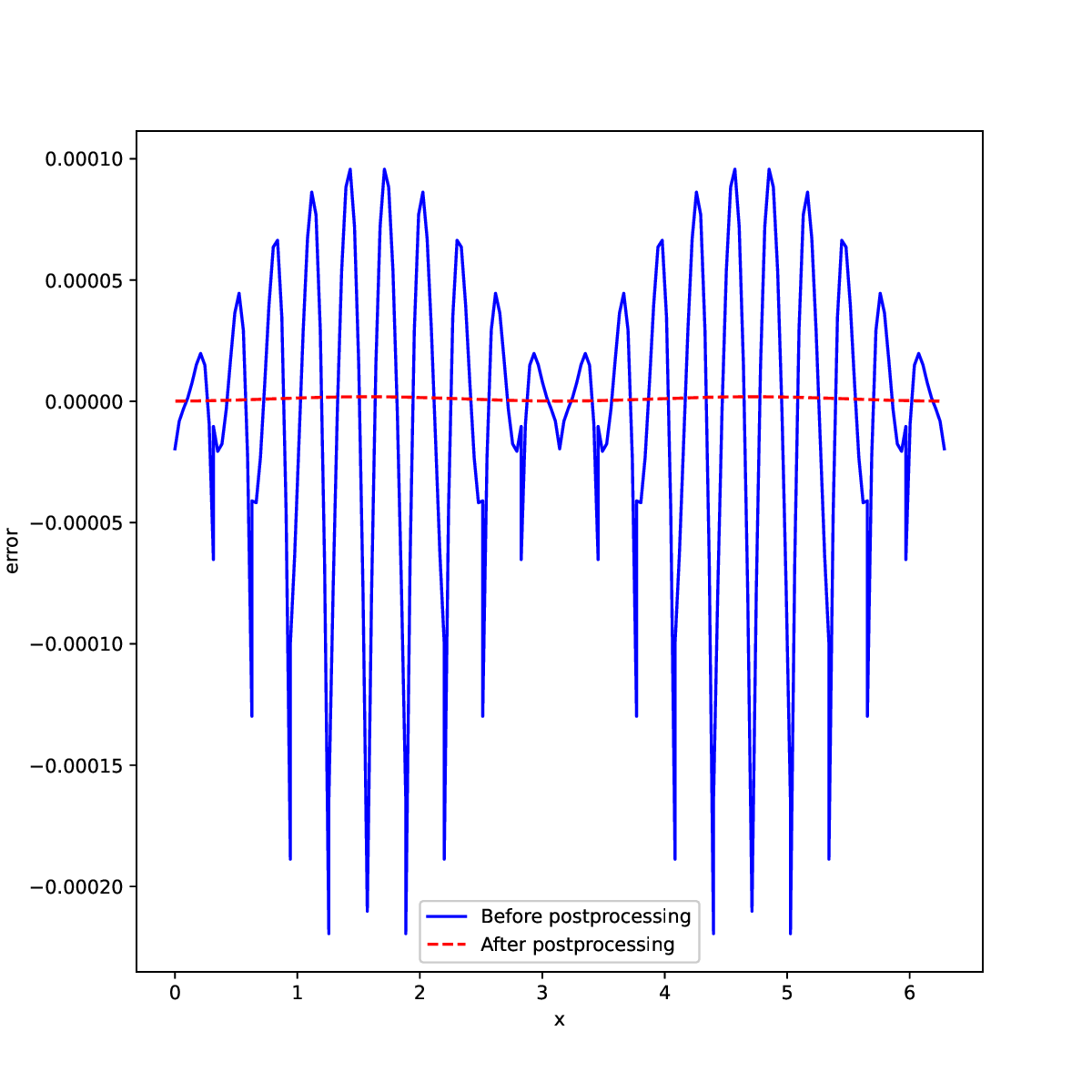}
        \caption{Variance $\sigma^2_u$.}
        \label{fig:error_variation_periodic}
    \end{subfigure}
    \caption{Errors before (solid line) and after post-processing (dashed line) for the mean $\mu_u$ and variance $\sigma^2_u$. $T=1$. $N=5$, $\mathbb{P}^2$ and $20$ elements.}
    \label{fig:error_comparison}
\end{figure}


\subsubsection{Affect of moments and smoothness}

In this section we illustrate the the affect on the approximated coefficients of setting the smoothness, $\ell,$ and number of moments, $r$ in the filter.  In Figure \ref{fig:error_comparison_ell}, the affect of different smoothness parameters for the errors in mean and variance are illustrated.  Here, the number of moments is kept fixed to be $2k,$ while the smoothness, $\ell,$ varies.  Note that the higher the smoothness, the more dissipation is being applied.  We see that overall errors are reduced for both mean and variance provided the filter satisfies $2k$ moments. However, there are more oscillations when $\ell<k+1.$  parameter corr First lest keep fixed the number of moments and play around with the spline degree $\ell$. For  $k=1$.

\begin{figure}[!htbp]
    \centering
    \begin{subfigure}{0.49\textwidth}
        \centering
        \includegraphics[width=0.9\textwidth]{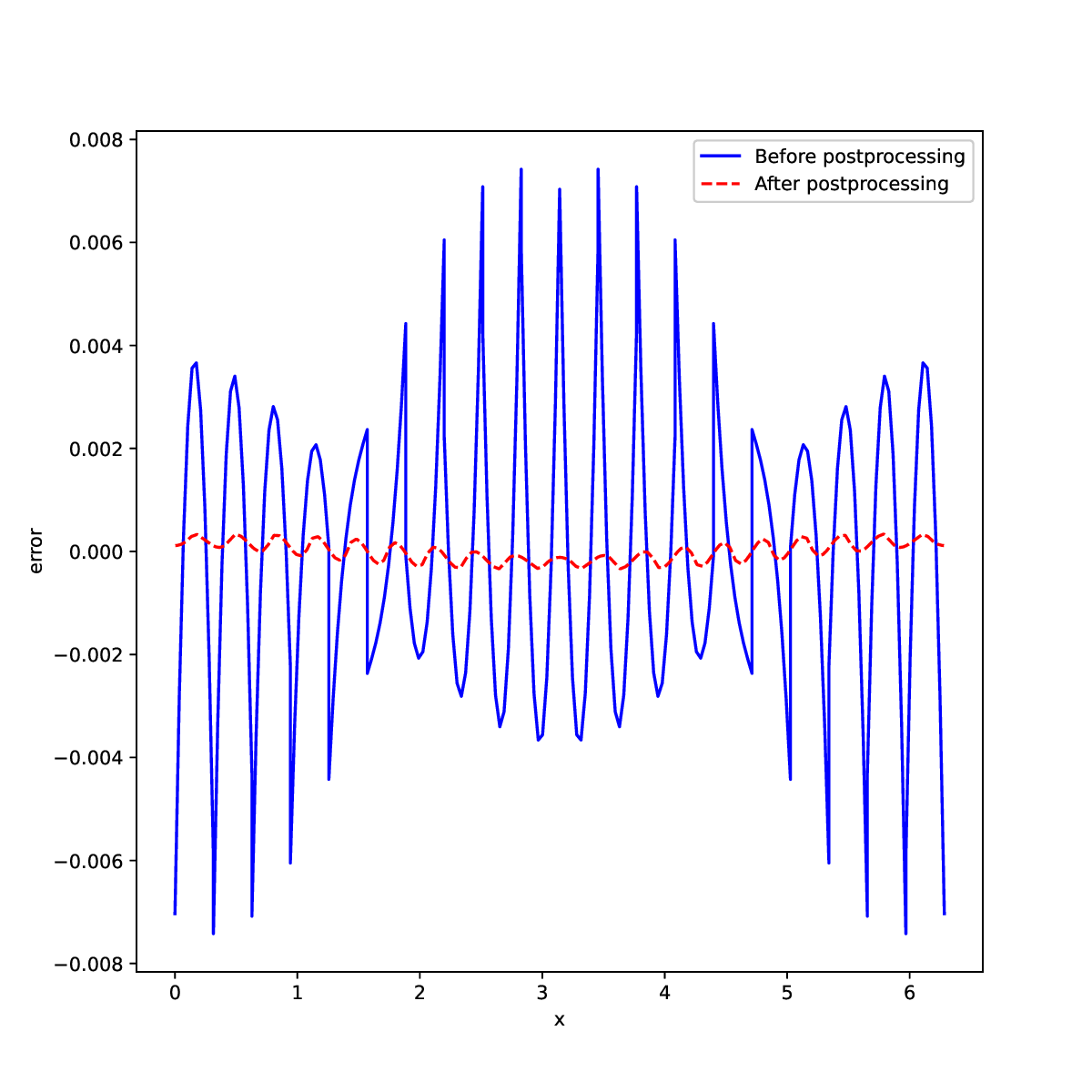}
        \caption{Mean $\mu_u$. $\ell=1$.}
        \label{fig:meanell1momfix}
    \end{subfigure}
    \hfill
    \begin{subfigure}{0.49\textwidth}
        \centering
        \includegraphics[width=0.9\textwidth]{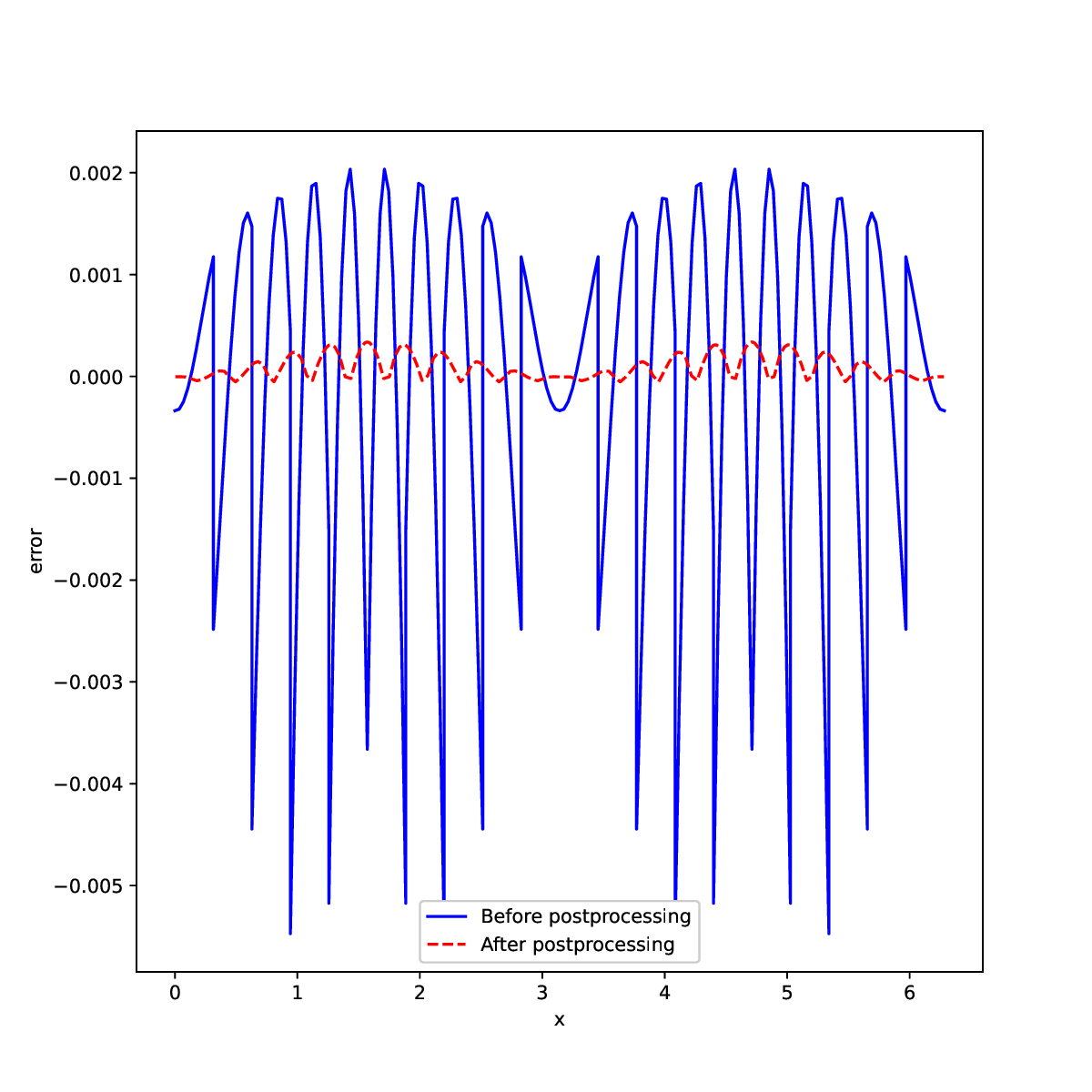}
        \caption{Variance $\sigma^2_u$. $\ell=1$.}
        \label{fig:varell1momfix}
    \end{subfigure}

    \begin{subfigure}{0.49\textwidth}
        \centering
        \includegraphics[width=0.9\textwidth]{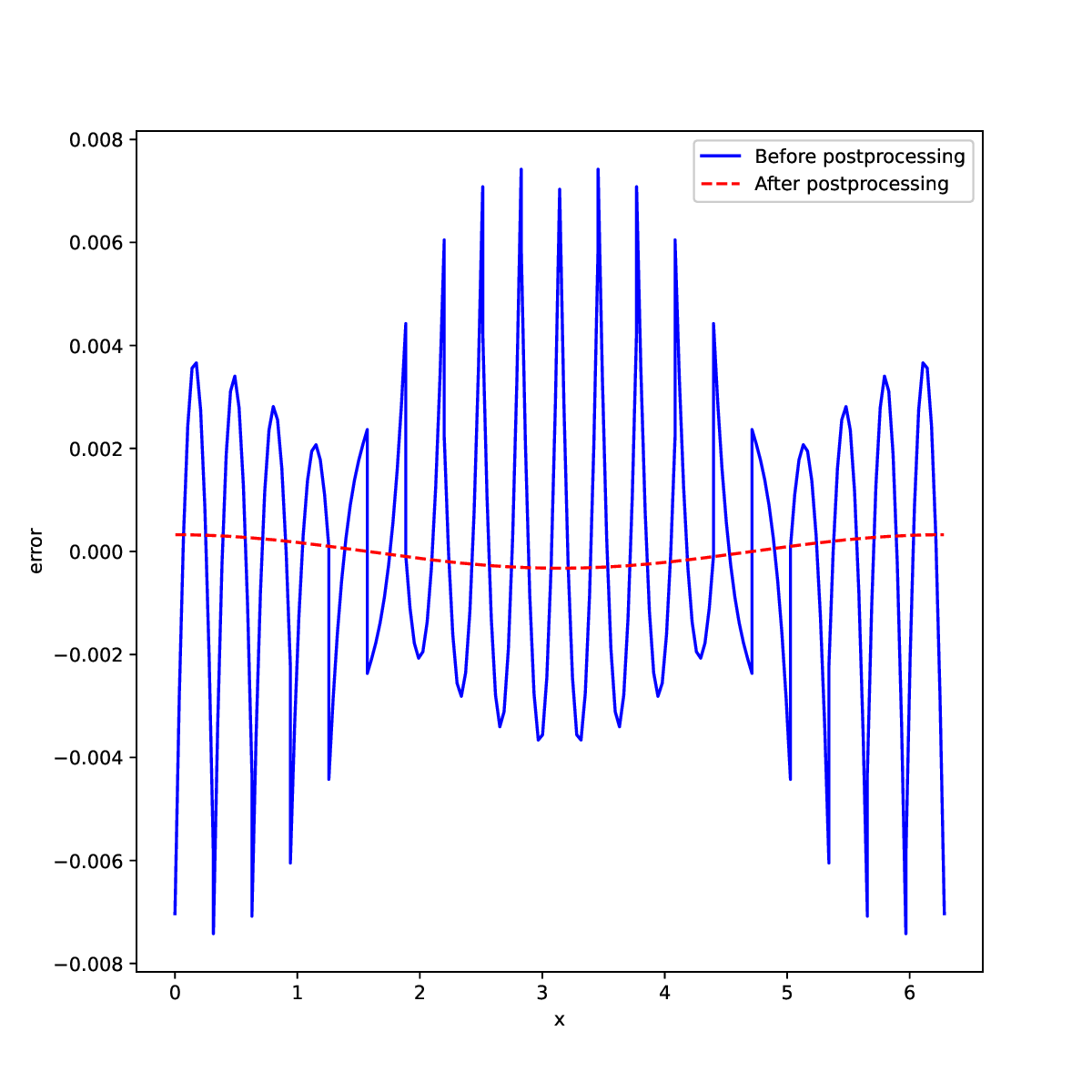}
        \caption{Mean $\mu_u$. $\ell=3$.}
        \label{fig:meanell3momfix}
    \end{subfigure}
    \hfill
    \begin{subfigure}{0.49\textwidth}
        \centering
        \includegraphics[width=0.9\textwidth]{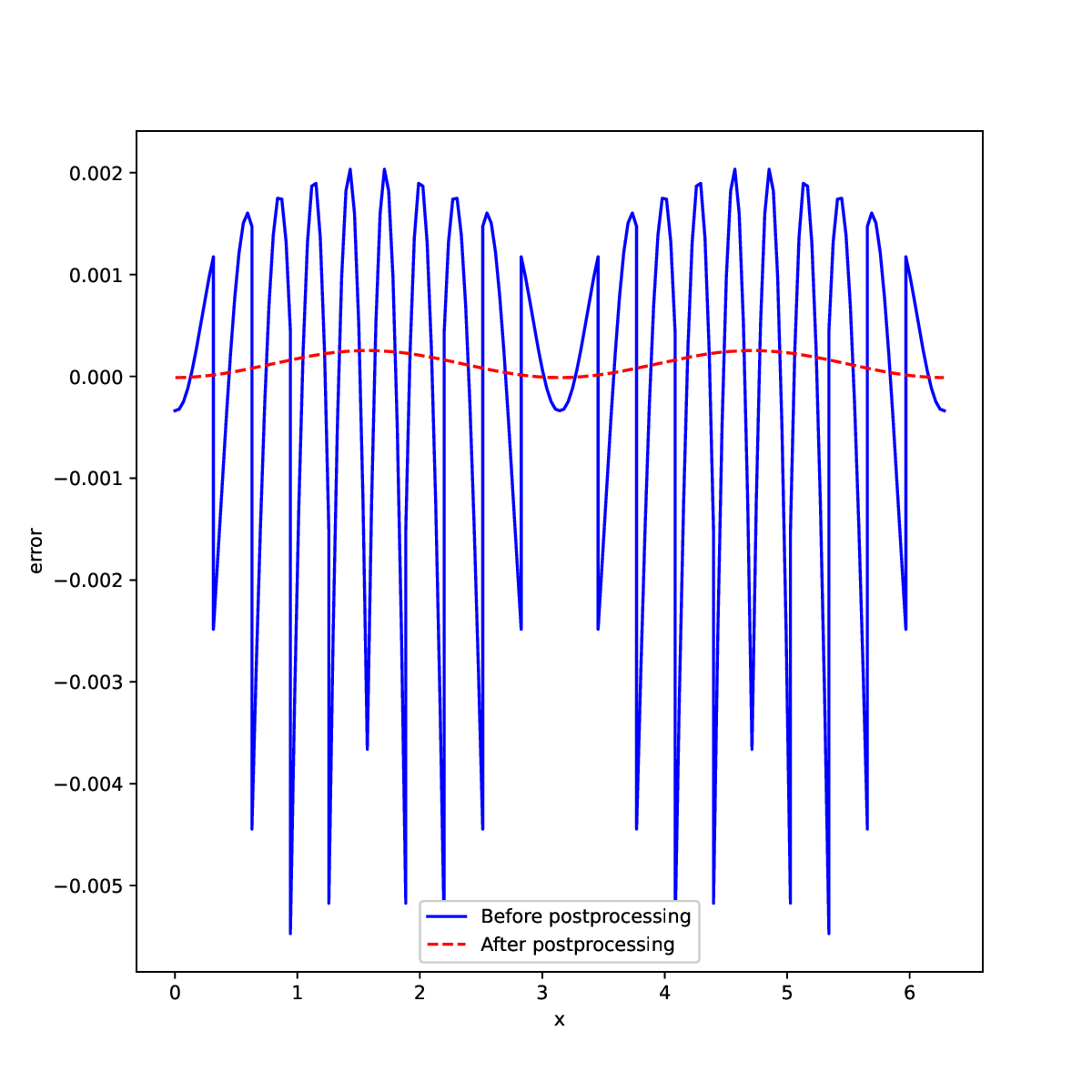}
        \caption{Variance $\sigma^2_u$. $\ell=3$.}
        \label{fig:varell3momfix}
    \end{subfigure}

    \begin{subfigure}{0.49\textwidth}
        \centering
        \includegraphics[width=0.9\textwidth]{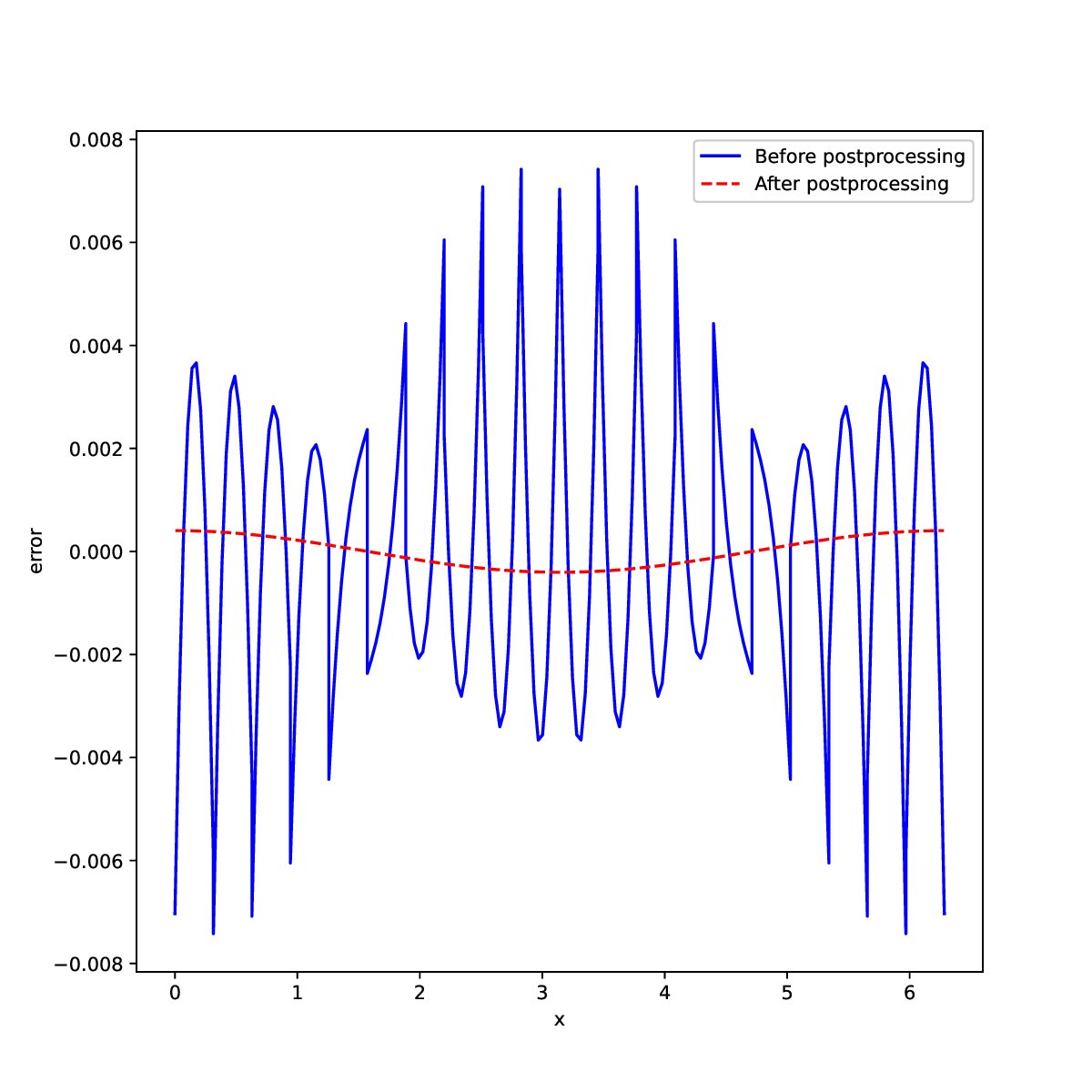}
        \caption{Mean $\mu_u$. $\ell=4$.}
        \label{fig:meanell4momfix}
    \end{subfigure}
    \hfill
    \begin{subfigure}{0.49\textwidth}
        \centering
        \includegraphics[width=0.9\textwidth]{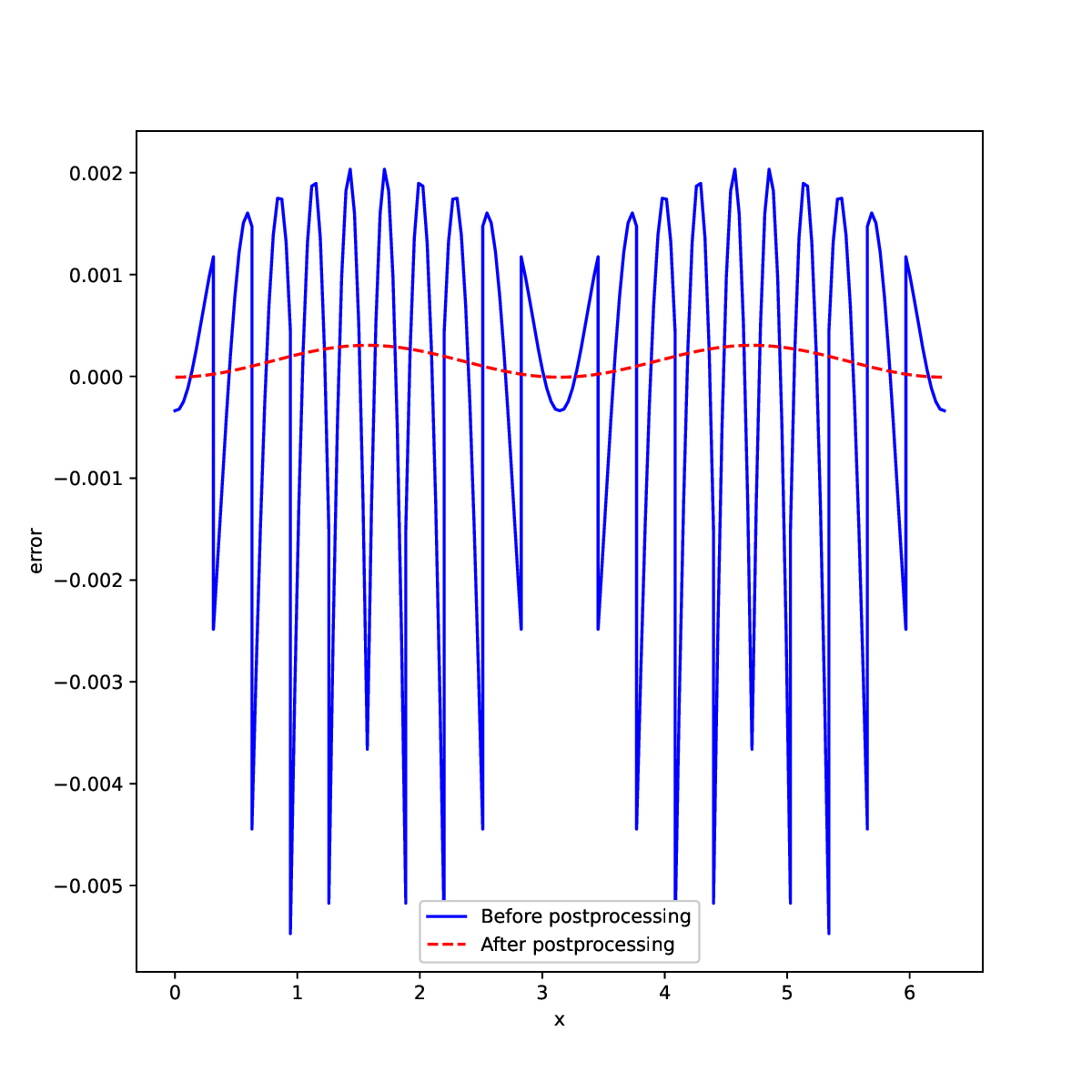}
        \caption{Variance $\sigma^2_u$. $\ell=4$.}
        \label{fig:varell4momfix}
    \end{subfigure}

    \caption{Comparison of errors for the mean and variance before and after post-processing for different values B-Spline degree $\ell$.}
    \label{fig:error_comparison_ell}
\end{figure}

Next, in \Cref{fig:error_comparison_moments}, an illustration of the affect of number of moments is shown.  Here, the  smoothness is $k+1.$  We clearly see that taking fewer than $2k$ moments, the errors for the mean and variance, while smooth, are worse.  As soon as $2k$ moments are used, the errors in the quantities of interest -- mean and variance -- are significantly reduced. 

\begin{figure}[!htbp]
    \centering

    \begin{subfigure}{0.49\textwidth}
        \centering
        \includegraphics[width=0.9\textwidth]{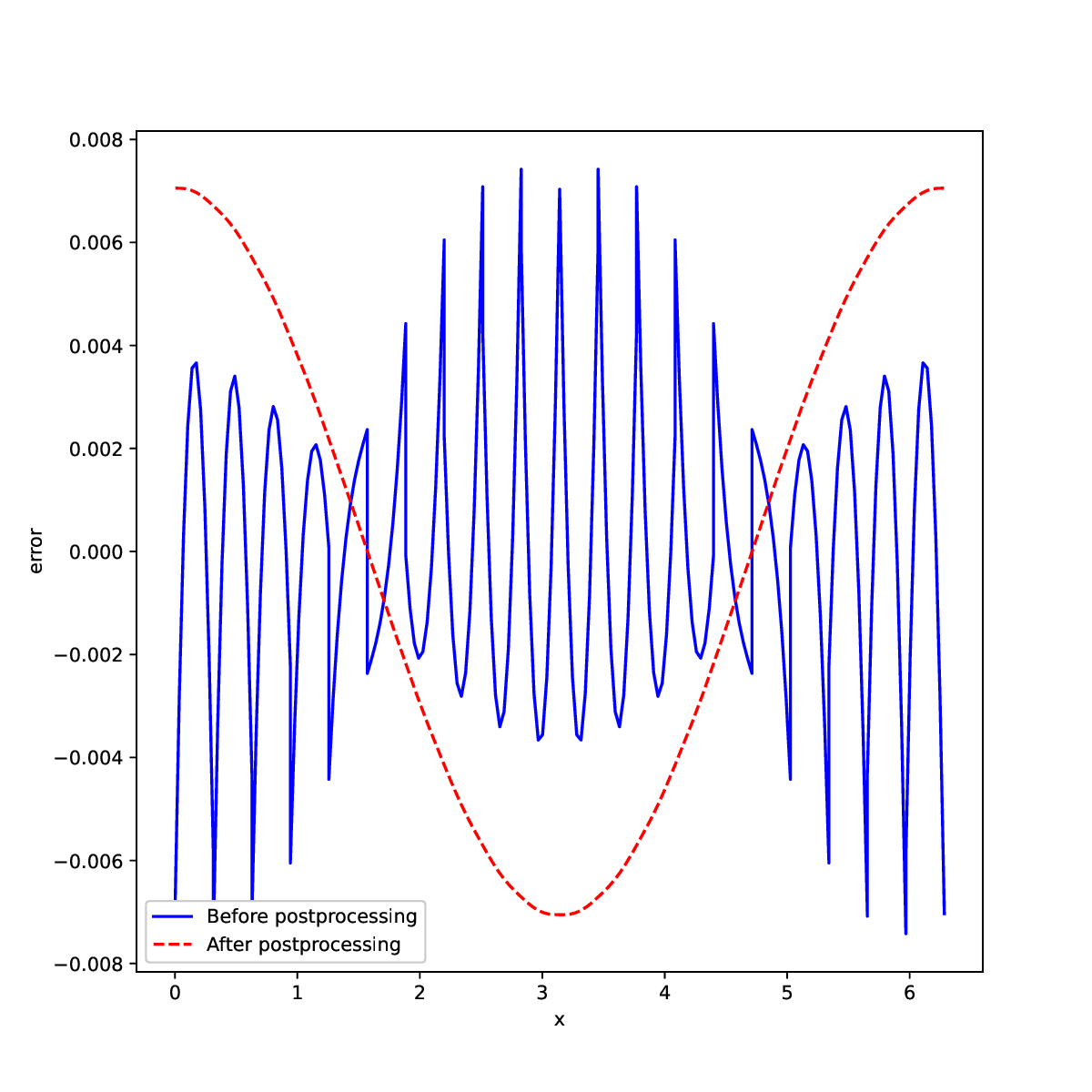}
        \caption{Mean $\mu_u$. $r=0$}
        \label{fig:mean_bs0}
    \end{subfigure}
    \hfill
    \begin{subfigure}{0.49\textwidth}
        \centering
        \includegraphics[width=0.9\textwidth]{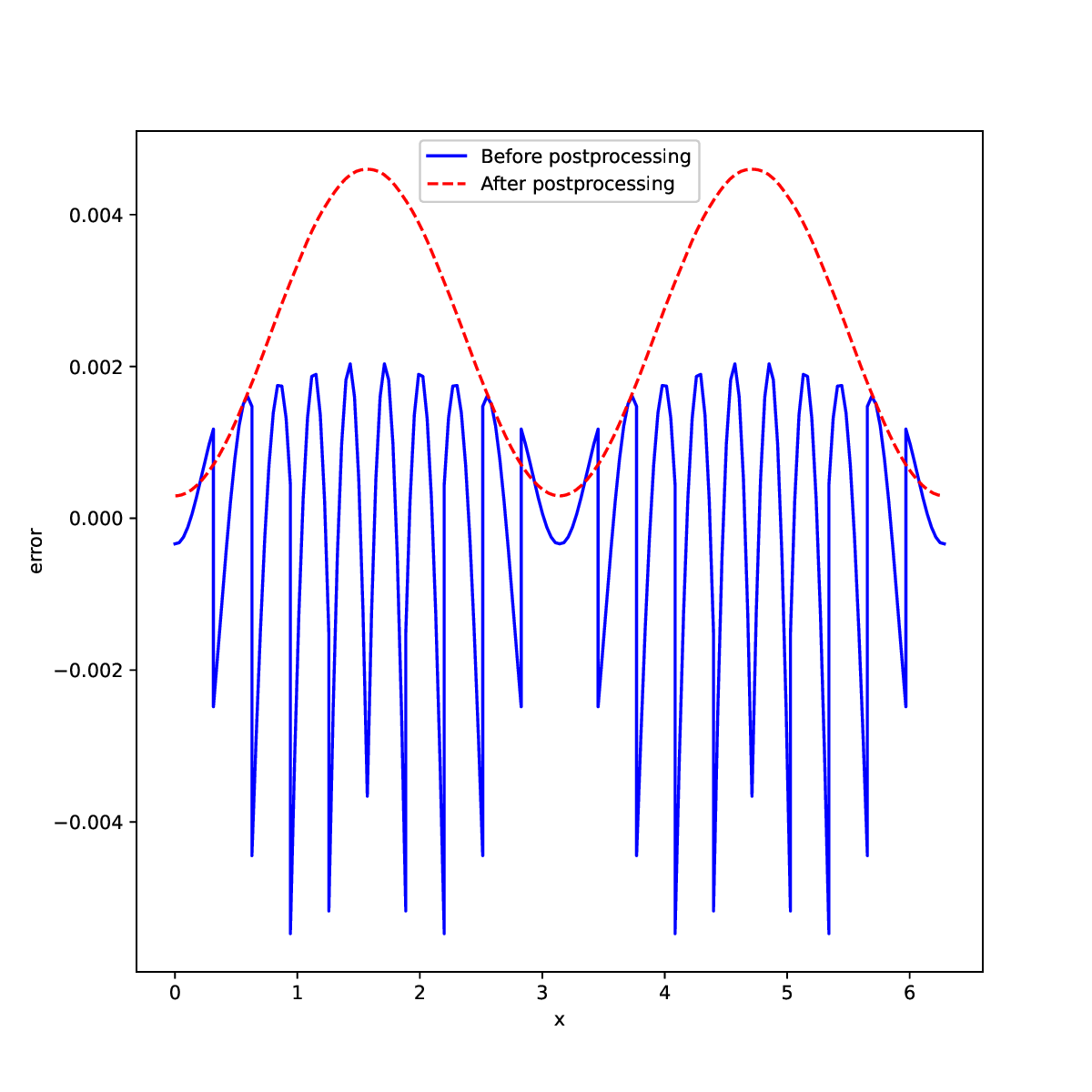}
        \caption{Variance $\sigma^2_u$. $r=0$}
        \label{fig:var_bs0}
    \end{subfigure}

    \begin{subfigure}{0.49\textwidth}
        \centering
        \includegraphics[width=0.9\textwidth]{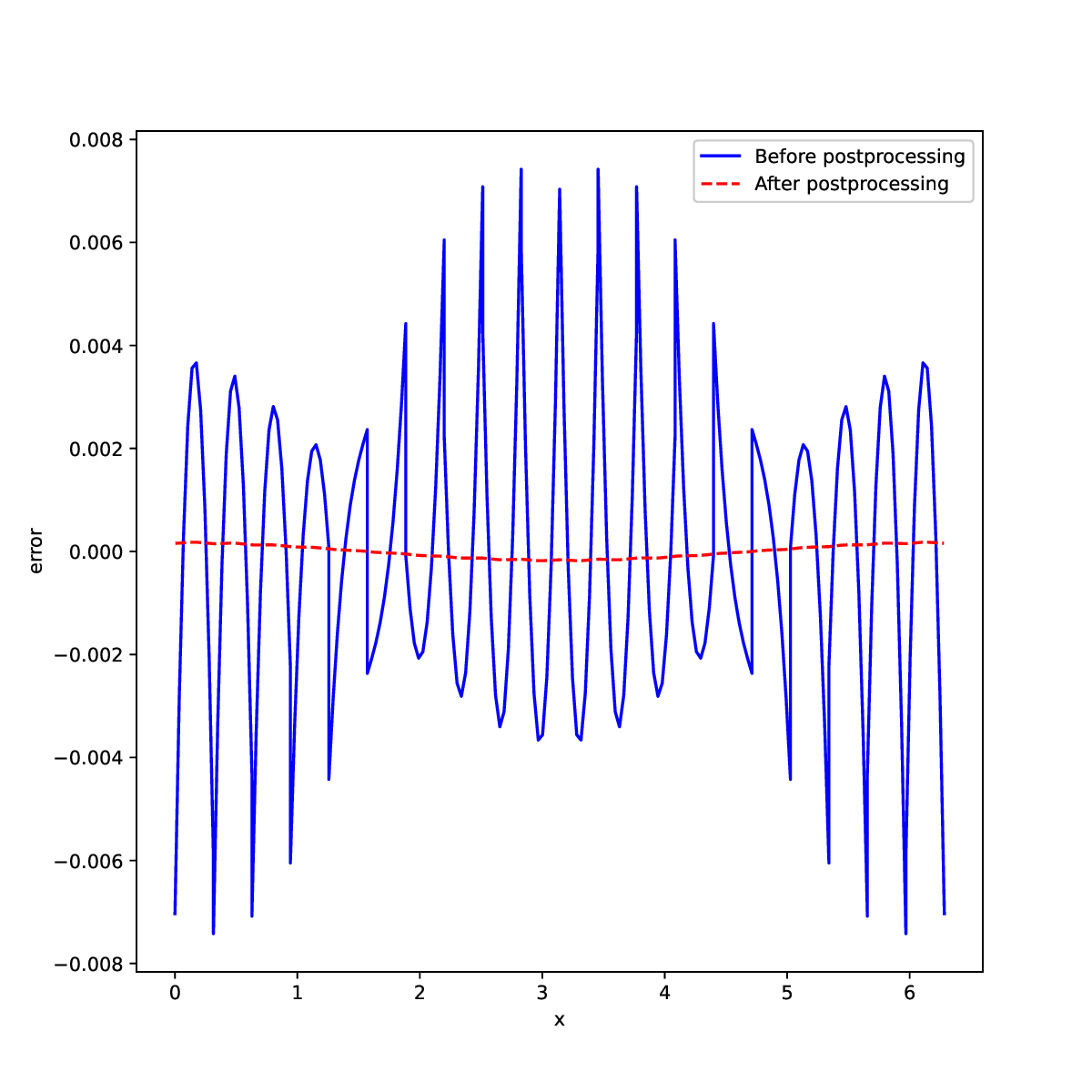}
        \caption{Mean $\mu_u$. $r=2$}
        \label{fig:mean_bs2}
    \end{subfigure}
    \hfill
    \begin{subfigure}{0.49\textwidth}
        \centering
        \includegraphics[width=0.9\textwidth]{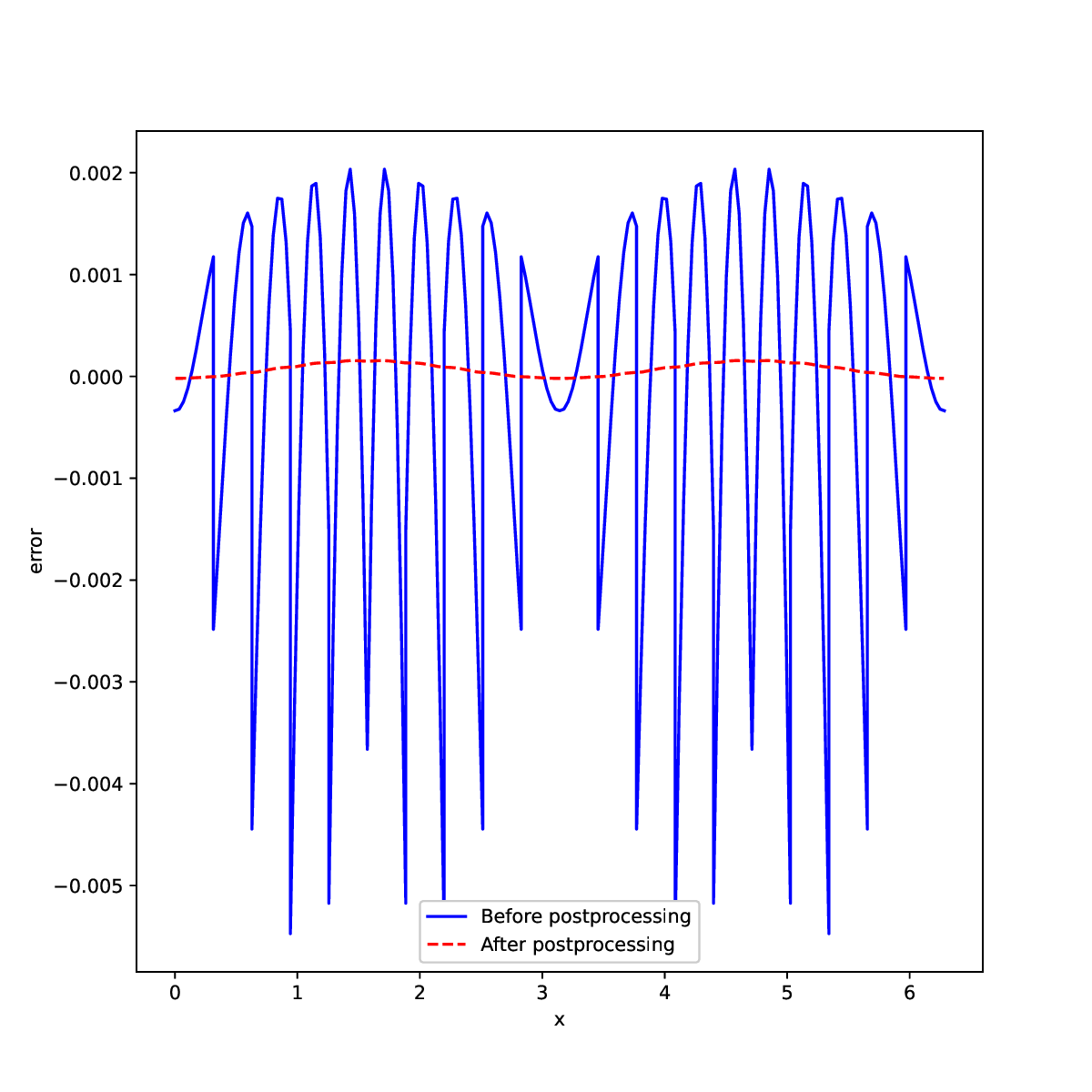}
        \caption{Variance $\sigma^2_u$. $r=2$}
        \label{fig:var_bs2}
    \end{subfigure}

    \begin{subfigure}{0.49\textwidth}
        \centering
        \includegraphics[width=0.9\textwidth]{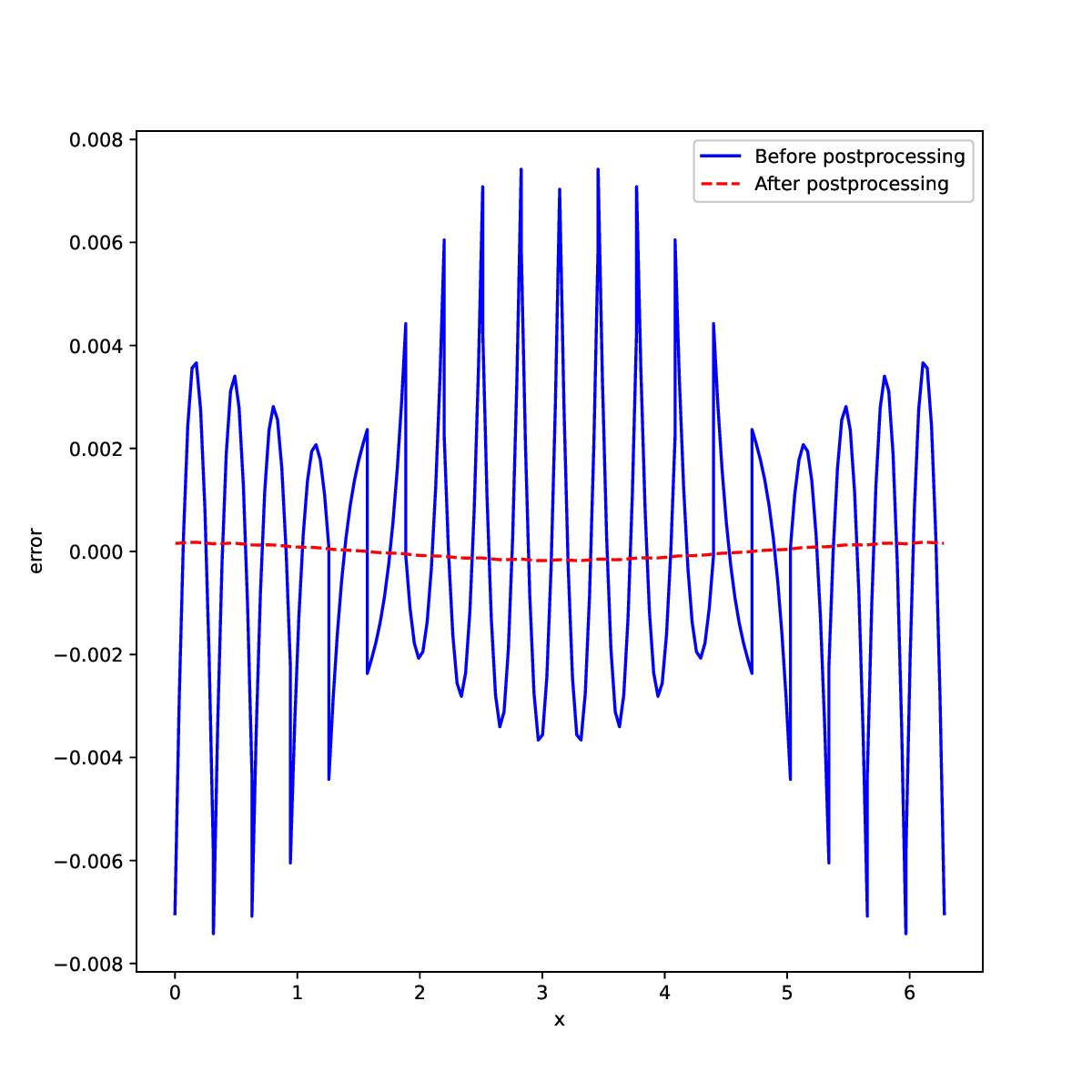}
        \caption{Mean $\mu_u$. $r=6$}
        \label{fig:mean_bs3}
    \end{subfigure}
    \hfill
    \begin{subfigure}{0.49\textwidth}
        \centering
        \includegraphics[width=0.9\textwidth]{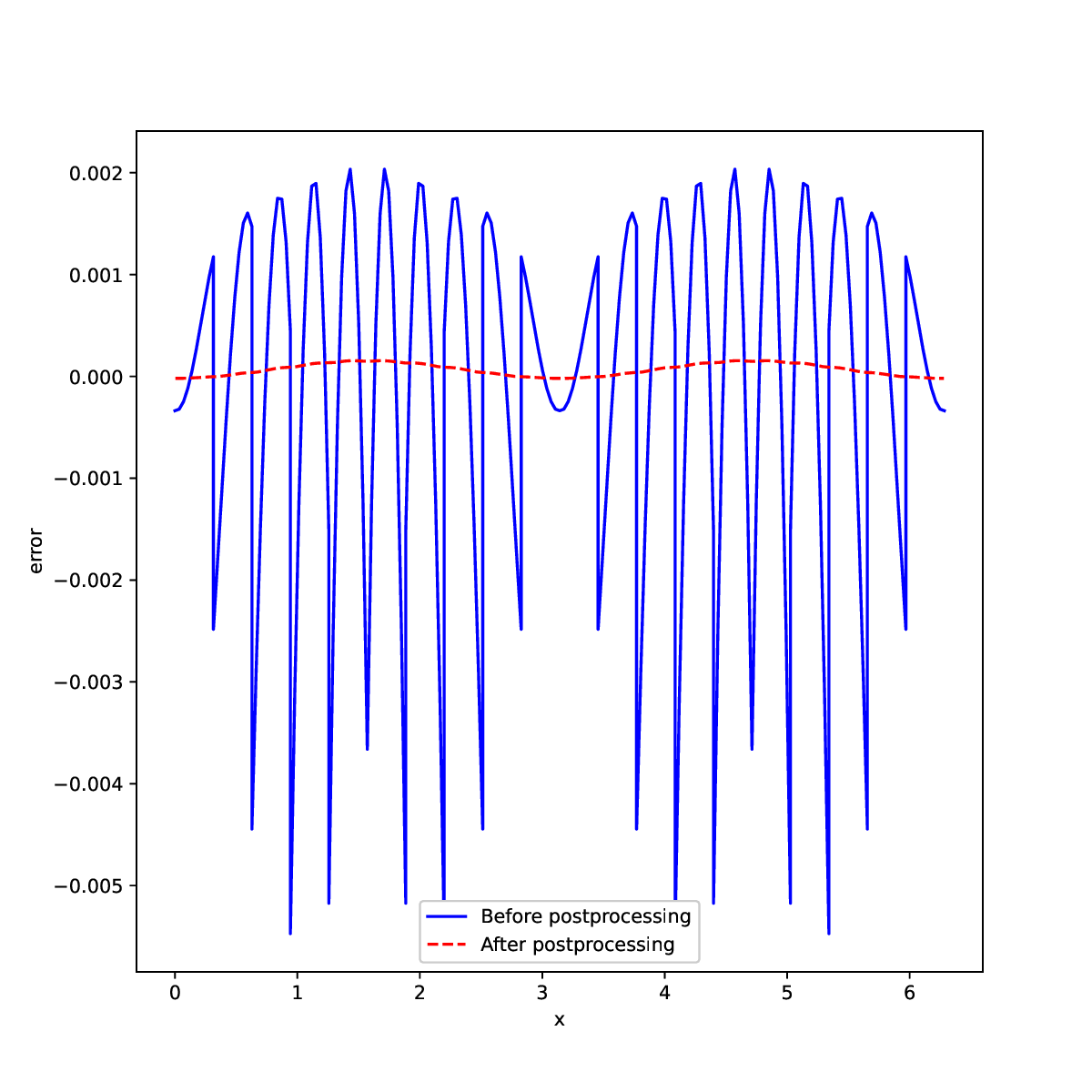}
        \caption{Variance $\sigma^2_u$. $r=6$}
        \label{fig:var_bs3}
    \end{subfigure}

    \caption{Comparison of errors before (solid line) and after post-processing (dashed line) for mean ($\mu_u$) and variance ($\sigma^2_u$) for enforcing different number of moments $r$. $T=1$. $N=5$, $\mathbb{P}^1$ and $20$ elements.}
    \label{fig:error_comparison_moments}
\end{figure}

\section{Conclusion}
The applicability of a SIAC filter applied to approximate coefficients for a generalized polynomial chaos expansion reduces the errors for the  first few moments of the solution to a simple wave equation.  This is exactly what is desired in uncertainty quantification. For the first time (from the knowledge of the authors) error estimates for the DG-gPC are presented for sufficiently smooth solutions both in physical and random space.  In order to establish the error estimate for the SIAC filter, we utilized the information in the negative order norm estimates. Our numerical examples verified the performance of the filter improving and reducing the numerical error and eliminating the noise from the spatial approximation of the mean and variance. Further, we illustrated the affect of the choice of parameters on the quality of the errors.  Hence, this article opens the applicability of SIAC filters to other hyperbolic problems with uncertainty, and other stochastic equations.
\bibliographystyle{plain}
\bibliography{main.bib}
\end{document}